\newtheorem{theorem}{Theorem}[section]
\theoremstyle{definition}
\newtheorem{definition}[theorem]{Definition}
\newtheorem{remark}{Remark}
\newtheorem{example}{Example}
\title[On ${\mathcal L}^1$ limit solutions] 
      {On ${\mathcal L}^1$ limit solutions in impulsive control }
\author[Monica Motta  and Caterina Sartori]{}
\subjclass{Primary: 49N25, 93C10; Secondary:  93C15,  49J15.}
 \keywords{ Impulsive control systems. Generalized solutions. Pointwisely  defined measurable solutions. Non commutative control systems. Impulsive optimal control problems}
 \email{motta@math.unipd.it}
 \email{sartori@math.unipd.it}
\thanks{This research is partially supported by the  INdAM-GNAMPA Project 2017 "Optimal impulsive control: higher order necessary conditions and gap phenomena";  and by the Padova University grant PRAT 2015 ``Control
of dynamics with reactive constraints''}
\begin{document}
\maketitle

\centerline{\scshape Monica Motta }
\medskip
{\footnotesize
 \centerline{Dipartimento di Matematica ``Tullio Levi-Civita'', }
   \centerline{Universit\`a di Padova}
   \centerline{ Via Trieste, 63, Padova  35121, Italy}
} 

\medskip

\centerline{\scshape Caterina Sartori}
\medskip
{\footnotesize
 \centerline{ Dipartimento di Matematica ``Tullio Levi-Civita'',}
   \centerline{Universit\`a di Padova}
   \centerline{Via Trieste, 63, Padova  35121, Italy}
}

\bigskip

\begin{abstract}
We consider a nonlinear control system   depending on two controls $u$ and $v$,  with  dynamics  affine in the  (unbounded) derivative of  $u$, and   $v$ appearing initially only in the drift term.  Recently, motivated by applications to optimization problems lacking coercivity,  \cite{AR} proposed  a notion of generalized solution $x$  for  this system, called {\it limit solution,}    associated to  measurable $u$  and $v$, and   with    $u$ of possibly unbounded  variation in $[0,T]$. As shown in \cite{AR}, when $u$ and $x$ have bounded variation,  such a solution (called in this case BV simple limit solution)  coincides with the most used graph completion solution (see e.g. \cite{BP}).  This correspondence  has been extended in \cite{MSuv}  to BV$_{loc}$   inputs $u$  and trajectories (with bounded variation just on any $[0,t]$ with $t<T$). Starting  with   an example of optimal control where the minimum does not exist in the class of limit solutions,  we propose  a notion of {\it extended limit solution} $x$, for which such a minimum exists.   As a first result, we prove that extended and original limit solutions coincide in the   special cases of BV and BV$_{loc}$ inputs $u$ (and solutions).  Then we consider dynamics where the ordinary control $v$  also appears  in the non-drift terms.   For the associated system we prove  that,   in the BV case,  extended  limit solutions   coincide  with   graph completion solutions. 
\end{abstract}

\section{Introduction}
We consider a control system of the form  
\begin{equation}\label{E}
 \dot x (t)= g_0(x(t),u(t),v(t)) +  \sum_{i=1}^m  {g}_i(x(t),u(t))\dot u_i(t) \quad \text{a.e. $t\in[0,T]$,} 
 \end{equation}
 \begin{equation}\label{EO}
 x(0)=\bar x_0,\quad u(0)=\bar u_0,
\end{equation}
 where $x\in {\mathbb{R}}^n$, $(u(t),v(t))\in U\times V$   and $U$, $V$ are compact sets.    
 \noindent System  \eqref{E} is a so-called {\it impulsive} control system,   where a solution $x$ can be  provided by the usual  Carath\'eodory solution only if $u$ is an absolutely continuous  control. For less regular $u$,   several concepts of impulsive solution have been introduced in the literature, either for  {\it commutative  systems, } where the Lie brackets $[({\bf e}_i,g_i),({\bf e}_j,g_j)]=0$ for all $i,j=1,\dots,m$  (see e.g. \cite{BR1}), or assuming $u$ (and $x$) to be functions of bounded variation, when the Lie Algebra is non trivial.  These  solutions  are described by different authors in fairly equivalent ways, and we will refer to them as graph completion solutions, since they are obtained  by completing the graph of $u$ (see e.g. \cite{BR},  \cite{MR},  \cite{SV}, \cite{MiRu}, \cite{WS}, \cite{AKP}, \cite{GS}, \cite{KDPS}).  In the less studied non commutative case  with measurable controls $u$ of unbounded variation, let us  mention  \cite{BR2}, \cite{LQ},  and the  definition of {\it limit solution} due to  \cite{AR}.  In the special case of  {\it BV simple limit solutions,} in which $u$ and $x$ are of bounded variation, in  \cite{AR} the authors showed that  any limit solution is a   graph completion solution and vice-versa (see Definitions   \ref{ls},  \ref{GCloc}, \ref{GClocS}).  This is an important result, since, on the one hand,  graph completion solutions have a simple explicit representation formula, not available for general  limit solutions. On the other hand, it proves that (pointwisely defined) graph completion solutions  are well-posed, in the sense that they coincide with all  and only pointwise limits of classical  solutions.   In  \cite{MSuv} we  extended  such a result to a case of unbounded variation, by  introducing  graph completion solutions associated to BV$_{loc}$ inputs $u$ (and trajectories) and  we proved  that they coincide with a special subset of simple limit solutions,   the BV$_{loc}$ simple limit solutions (see Definition \ref{lsl}).
 
In this paper we analyse the concept of limit solution and, starting from  an example in optimal control for  which the infimum over limit solutions is not a minimum,   we introduce a notion of {\it extended limit solution},  where such a  minimum does exist.
 As a first result, in Theorem \ref{Pvk} we prove that this new definition coincides with the original one in the special  cases of  of BV simple or  BV$_{loc}$ simple  limit solutions (see Definitions \ref{ls}, \ref{lsl}).  As a consequence,  all the results available for  these two classes of limit solutions are still  valid for their extended counterpart. 
 
  Furthermore, we investigate  control systems of the form
  \begin{equation}\label{Evintro}
 \dot x (t)= g_0(x(t),u(t),v(t)) +  \sum_{i=1}^m  {g}_i(x(t),u(t),v(t))\dot u_i(t) \quad \text{a.e. $t\in[0,T]$,} 
 \end{equation}
  where  all the $g_0,g_1,\dots,g_m$  depend  on the control $v$.  The definition of limit solution for \eqref{Evintro}  was left as an open problem in \cite{AR}.  Indeed, our  notion of extended limit solution can be adapted to this case, allowing us to show,   in Theorem \ref{Th2},   that extended BV simple limit solution  and  graph completion solutions to  \eqref{Evintro}, \eqref{EO} coincide.  This result   extends to   system  \eqref{Evintro} the analogous   \cite[Thm. 4.2]{AR} regarding  \eqref{E}.  As remarked in \cite{AR},   already  when $u$ (and $x$) has bounded variation, the dependence of $g_1,\dots,g_m$ on $v$  is much more critical than   just  the $v$-dependence of $g_0$,  in that a simultaneous jump of  $u$  and $v$  makes  the determination of the corresponding jump of $x$ quite delicate.
   
 The precise definitions of limit solution and extended limit solution  will be given in Sections \ref{Prel},  \ref{mr}. Here we just point out that the notion of  limit solution involves  a  control  $v$  which is measurable,  while  the control  $u$ and the corresponding solution $x$ are pointwisely defined and belong to the set ${\mathcal L}^1$ of the everywhere defined integrable functions. Let us  describe  a special case of  extended limit solution.  An {\it extended  simple limit solution}  $x$ to \eqref{E}, \eqref{EO}  associated to $(u,v)$,  is  the  pointwise  limit of a sequence of classical trajectories $(x_k)$ to \eqref{E}, \eqref{EO}, corresponding to controls $(u_k,v_k)$  with  $u_k$ absolutely continuous and pointwisely converging to $u$  and  $v_k\to v$ in $L^1$-norm (see Definition \ref{edsdef}).  We recall that  a {\it simple limit solution} $x$  is instead defined in \cite{AR} as  the  pointwise  limit of a sequence of classical trajectories associated to controls $(u_k,v)$  with  $u_k$ as above and $v$ fixed (see Definition \ref{ls}).  Our extension  is motivated by the observation  that in optimal control problems  minimizing sequences $(x_k, u_k,v_k)$ with absolutely continuous inputs $u_k$, might converge to a map which is not a limit solution. Precisely, in Example \ref{Ex3} we have that the infimum value of an optimal control problem over limit solutions and extended limit solutions is the same, but it is a minimum only within  the larger class of extended limit solutions. The   two infima may be  actually different, as shown in  Example  \ref{Ex3bis}.

The need of considering generalized solutions to   \eqref{E} or  \eqref{Evintro} and   \eqref{EO}, associated to discontinuous $u$  comes, for instance,  from  optimal control, where, in absence of coercivity assumptions,  it is reasonable to expect the existence of   optimal solutions only in some enlarged class.  The impulsive control theory,  studied since the 50s,  received  in the last years a renewed attention because of the increasing number  of applications in different  fields,  from Lagrangian mechanics with moving constraints \cite{BR0}, \cite{BP},   or impactively blockable degrees of freedom  \cite{Y}, \cite{GoS}, to  alternative models for hybrid systems \cite{A}, \cite{FGP}, \cite{KT},   \cite{HCN},  just to give some examples. These applications set new problems also from the theoretical point of view, in particular since they lead to consider control systems nonlinear in the state variable  like   \eqref{E}  or  \eqref{Evintro},  and    various types of constraints.

{\vskip 0.2 truecm}
The paper is organized as follows. 
We end this section with some notation and  the precise  assumptions.  In Section \ref{Sex} we present two examples that motivate the notions of extended limit solutions, which we propose in Section   \ref{mr}.   Section  \ref{Prel}, is devoted to recall the original concepts of limit  solution due to \cite{AR} and the recent definition of BV$_{loc}$ limit solution introduced in \cite{MSuv}.   In Theorem  \ref{Pvk} of Section \ref{mr} we prove that original and extended   BVS limit solutions and   BV$_{loc}$S limit solutions, respectively,  coincide.  In Section \ref{Sfex} we introduce the $v$-dependent control system \eqref{Evintro} and in Theorem \ref{Th2} we establish that a map $x$ is an extended BVS limit solution to  \eqref{Evintro}, \eqref{EO} if and only if it is a graph completion solution.

\subsection{Notation}  Let  $E\subset {\mathbb{R}}^N$.  Given  $T>0$,  let  \newline $AC([0,T],E):=\{f:[0,T]\to E, f\,{\rm absolutely \,continuous}\}$,  \newline     $BV([0,T], E):=\{f:[0,T]\to E:\, Var_{[0,T]}(f)<+\infty\}$,  \newline
where   $Var_{[0,T]}(f)$   denotes the (total) variation of $f$ in $[0,T]$, and \newline
 $BV_{loc}([0,T[,E):= \{f\in BV([0,t],E)   \, \forall t<T , \   \lim_{t\to T}Var_{[0,t]}[f]\le+\infty \}.$

\noindent We  use ${\mathcal L}^1([0,T], E)$ to denote the set of the everywhere defined integrable functions on $[0,T]$ with values in $E$, while  $L^1([0,T], E)$ is its usual quotient space with respect to the Lebesgue measure.  When no confusion on the codomain may arise, we omit it and  write, for instance, $AC(T)$ in place of $AC([0,T],E)$.  Let us set  ${\mathbb{R}}_+:=[0,+\infty[$ and  call  {\it modulus } (of continuity) any increasing, continuous function $\omega:{\mathbb{R}}_+\to{\mathbb{R}}_+$ such that $\omega(0)=0$ and  $\omega(r)>0$ for every $r>0$.
 
\noindent For any control $(u,v)\in {AC}(T)\times  {L^1}(T)$ with $u(0)=\bar u_0$, we let
$$
x=x[\bar x_0,\bar u_0,u,v]
$$
denote the (unique) Carath\'eodory solution to   (\ref{E})--(\ref{EO}), defined on $[0,T]$.  We will say that such $(u,v)$ and $x$ are {\it regular}.

\subsection{Assumptions} Let us recall   the so--called Whitney property (see \cite{W}). 
\begin{definition}[Whitney property]\label{W}  
A  compact subset  $U\subset {\mathbb{R}}^m$ has the Whitney property if there is some $C\ge1$ such that for every pair $(u_1 , u_2 ) \in U \times U$, there exists an absolutely continuous path $\tilde u: [0, 1] \to U$ verifying
\begin{equation}\label{hCA}
\tilde u(0)=u_1, \quad \tilde u(1)=u_2, \quad Var[\tilde u]\le C|u_1-u_2|.
\end{equation}
\end{definition}
For instance,  compact, star-shaped sets enjoy the Whitney property. 

{\vskip 0.2 truecm}
Throughout the paper we assume the following hypotheses:
\begin{itemize} \item[{\bf (H0)}] 
{\it \begin{itemize}
\item[{\rm (i)}] the sets $U \subset {\mathbb{R}}^m$,  $V \subset {\mathbb{R}}^l$ are compact and $U$ has the Whitney property;
 \item[ {\rm (ii)}] the control vector field $g_0: {\mathbb{R}}^n \times U \times V\to{\mathbb{R}}^n$  is continuous and, moreover,
 $ (x,u) \mapsto  g_0(x,u,v)$
 is locally Lipschitz on ${\mathbb{R}}^n \times U$ uniformly in $v \in V$;
  \item[ {\rm (iii)}] for each $i=1,\dots,m$  the control vector field  $g_i: {\mathbb{R}}^n \times U\to{\mathbb{R}}^n$  is locally Lipschitz continuous;
  \item[ {\rm (iv)}] there exists $M>0$ such that
  $$
|g_0(x,u,v)|, \, |g_1(x,u)|,\dots, \, |g_m(x,u)| \leq  M(1+|(x,u)|),$$
  for every $(x,u,v) \in {\mathbb{R}}^n\times U\times V$.
\end{itemize}}
  \end{itemize}

\section{Examples}\label{Sex}
This section is devoted to motivate,  by means of two simple examples,  the need of enlarging the class of limit solutions, introducing  a notion of {\it extended} limit solution. Precisely, in Example \ref{Ex3} we exhibit  an optimal control problem where the infimum value  over limit solutions and extended limit solutions is the same, but the minimum is achieved only within  the larger class of extended limit solutions. In Example \ref{Ex3bis} we present a minimum problem where there is  a gap between the  infimum over limit solutions and extended limit solutions and a gap between the  infimum over regular solutions and  limit solutions.
 
\noindent These phenomena may happen since in  both examples {\it any } regular minimizing control sequence $(u_k,v_k)$ verifies $\displaystyle\lim_{k\to+\infty}$Var$(u_k)=+\infty$.

 {\vskip 0.2 truecm}  
\begin{example}\label{Ex3} {\rm 
Let us consider  the control system in ${\mathbb{R}}^4$,
\begin{equation}\label{din3}
\dot x =  g_0(x)\, v+g_1(x)\,\dot u_1+g_2(x)\,\dot u_2 \ \ \text{a.e. $t\in[0,2\pi]$,}  \quad |u|,\,|v|\le 1, 
\end{equation}
with 
$$\begin{array}{llll}
g_0(x):=\eta(x) (0,0,0,v)^T\\
g_1(x):=\eta(x)(1,0,x_3x_2,{-x_4x_2})^T\\
g_2(x) := \eta(x)(0,1,-x_3x_1,{x_4x_1})^T \end{array}
$$
($\eta$ is a {cut-off function,} sufficient to guarantee the sublinearity hypothesis on the dynamics) 
and initial condition 
$$
(x,u)(0):= (\bar x_0,\bar u_0)=((0,0,1,0) ,(0,0)).
$$
Let us  introduce  the Bolza optimization problem 
$$
\displaystyle\inf_{(x,u,v)}\, J(x,u,v),
$$
 where
$$
J(x,u,v):= \int_0^{2\pi}(|u(t)|+|v(t)|)\,dt+(2\pi-x_4(2\pi))^2.
$$
We now construct a minimizing sequence $(x_k, u_k,v_k)$ within the class of regular trajectory-control pairs.  For every $k$,   let us set, for $t\in[0,2\pi]$, 
\begin{equation}\label{uk}
(u_k,v_k)(t){:=} \left(\frac{1}{\sqrt[3] k} \big( \cos(kt)-1,\sin(kt) \big)\chi_{_{[2\pi/k,2\pi]}}(t)\,,\,  k\,e^{-2\pi\sqrt[3] k}\chi_{_{[0,2\pi/k]}}(t)\right).
\end{equation} 
The  corresponding solution $x_k:=x[\bar x_0,\bar u_0,u_k,v_k]$ is given,  for $t\in[0,2\pi]$,  by
 $$
 \left\{\begin{array}{l}
 x_{1_k}(t)= u_{1_k}(t), \\ [1.5ex]
 x_{2_k}(t)= u_{2_k}(t),  \\ [1.5ex]
 x_{3_k}(t)= \chi_{_{[0,2\pi/k[}}(t)+e^{-\sqrt[3]{k}\left(t-\frac{\sin(kt)}{k}-\frac{2\pi}{k}\right)}\chi_{_{[2\pi/k,2\pi]}}(t), \\ [1.5ex]
 x_{4_k}(t)= 
 k\,e^{-2\pi\sqrt[3] k}\,t\chi_{_{[0,2\pi/k[}}(t)+2\pi\,e^{ \sqrt[3] k\left(t-2\pi-\frac{\sin(kt)}{k}-\frac{2\pi}{k}\right)} \chi_{_{[2\pi/k,2\pi]}}(t).\end{array}\right.
$$ 
One has that 
$$
\displaystyle\lim_{k\to+\infty} \, J(x_k,u_k,v_k)=0,
$$ 
so that the infimum of the cost over  regular trajectory-control pairs turns out to be $0$. Clearly, this is  not a minimum, since  the unique optimal control must be $u\equiv0$ and $v=0$ a.e.,  whose associated Charath\'eodory solution to \eqref{din3} gives a cost equal to $4\pi^2$.  A minimum can be reached  only over some enlarged set of generalized controls and solutions. Notice that  
$$
\displaystyle\lim_{k\to+\infty}\,u_k(t)=0 \ \ \forall t\in[0,2\pi], \quad \lim_{k\to+\infty}\|v_k-v\|_{_{L^1(2\pi)}}=0.
$$ 
Hence if we  define as {\it extended limit solution} to  \eqref{din3} associated to the control $(u,v)=(0,0)$ a.e., the  limit function
\begin{equation}\label{esls}
\displaystyle x(t):=\lim_{k\to+\infty} x_k(t)=(0,0,1,0)\,\chi_{_{\{t=0\}}}(t)+(0,0,0,0)\chi_{_{]0,2\pi[}}(t)+ (0,0,0,2\pi)
\end{equation}
for $t\in[0,2\pi]$,   we obtain  
$$
J(x,0,0)=0.
$$
Therefore in the class of extended limit solutions  the minimum does exist (see Definition \ref{edsdef}).

 {\vskip 0.2 truecm}   
Let us point out that $x$ is {\it not} a {\it limit solution}  as defined in \cite{AR}, because of the varying $v_k$ (see Definition \ref{ls}). Indeed, as already observed, the optimal control has to be $(u,v)=(0,0)$ a.e., but any sequence $\tilde x_k:=x[\bar x_0,\bar u_0,\tilde u_k,0]$ associated to an arbitrary sequence $(\tilde u_k)$ pointwisely converging to $0$, verifies
$$
 \tilde x_{4_k}\equiv0 \quad\text{for every $k$,}
$$
so that $J(\tilde x_k,\tilde u_k,0)=4\pi^2$ for every $k$. Thus the minimum of the above optimization problem does not exist in the class of limit solutions. } 
\end{example}

Slightly modifying the previous example and adding some constraints, we can provide a case where the infima over regular solutions, over  limit solutions and over extended limit solutions are all different.

\begin{example}\label{Ex3bis} {\rm
Let us introduce the control system in ${\mathbb{R}}^5$, obtained by adding to \eqref{din3} the equation
$$
\dot x_5(t)=|v(t)|+|u(t)|  \quad \text{ for a.e. $t\in[0,2\pi]$,}
$$
with  initial  and end-point conditions 
$$
(x,u)(0):= (\bar x_0,\bar u_0)=((0,0,1,0,0) ,(0,0)), \quad x(2\pi)\in {\mathbb{R}}^4\times\{0\}.
$$
Let  us now set $\Psi(x):=|x_3|+|2\pi- x_4|$  for any $x\in {\mathbb{R}^5}$ and  consider the Mayer problem 
$$
\displaystyle\inf_{(x,u,v)}\, \Psi(x(2\pi)).
$$
Let us call admissible the  trajectory-control pairs satisfying the constraints.
Since only controls $(u,v)$ with  $(u,v)=0$ a.e. give rise to admissible trajectories,  the  calculations  in Example \ref{Ex3} imply that the unique  admissible  regular solution $x=x[\bar x_0,\bar u_0,0,0]$ has $(x_3,x_4)\equiv (1,0)$. Hence the infimum of the cost   over  regular solutions  is equal to $1+2\pi$. All admissible limit solutions $\tilde x$ are  pointwise limits of regular solutions $\tilde x_k:=x[\bar x_0,\bar u_0,\tilde u_k,0]$, associated to regular control sequences $(\tilde u_k)$  converging to $u=0$ (and fixed $v=0$).  Hence $\tilde x_4\equiv 0$ in any case, but  taking $\tilde u_k:=u_k$ defined by  \eqref{uk}, one has $\tilde x_3(2\pi)=1$, so that the minimum in the class of limit solutions is $\Psi(\tilde x(2\pi))=2\pi$.  Finally, the extended limit solution $x=(x_1,\dots, x_4 , x_5)=(x_1,\dots, x_4 ,0)$, where $(x_1,\dots,x_4)$  are given by \eqref{esls}, is associated to the  control $(u,v)=(0,0)$  a.e.,  verifies the constraints and has cost  $\Psi(x(2\pi))=0$. Therefore the minimum over extended limit solutions exists and is equal to $0$.   
 }
\end{example}
Let us point out that  when  there are no constraints and the cost is continuous,  by the very definition of limit solution,  the infimum value  over the different classes of solutions considered above is always the same.    The difference between the infima, as in Example \ref{Ex3bis},  is instead a  generic  situation in the presence of constraints, which are unavoidable in most   applications. In this note we do not discuss the  Lavrentiev-type gap issue, that is    the occurrence of infimum gaps   (see e.g. \cite{AMR}). Let us just observe that  in several real models, as for instance the mechanical examples in \cite{BP},  only absolutely continuous controls $u$  are implementable. In these cases, the no-gap requirement is mandatory.

\section{Definitions  and preliminary results} \label{Prel}
We start recalling  the concept   of  limit solution,   given in \cite{AR}  for vector fields $g_1,\dots, g_m$ depending on $x$ only and extended to $(x,u)$-dependent data in \cite{AMR}.   We will write ${\mathcal L}^1(T):={\mathcal L}^1([0,T],U)$  to denote the set of pointwisely defined Lebesgue integrable functions with values in $U$ and set $L^1(T):= L^1([0,T],V)$, $AC(T):= AC([0,T],U)$.
\begin{definition}[{\sc Limit solutions}]\label{ls}  
Let   $(\bar x_0,\bar u_0) \in {\mathbb{R}}^n\times U$ and let  $(u,v)\in {\mathcal L}^1(T)\times L^1(T)$ with $u(0)=\bar u_0$.
\begin{enumerate}
\item {\sc (Limit Solution)} A map $x$ belonging to ${\mathcal L}^1([0,T], {\mathbb{R}}^n)$ is  called a {\em limit solution}   of the Cauchy problem (\ref{E})-(\ref{EO}) corresponding to  $(u,v)$
 if, {\em for every $\tau \in [0,T],$}  there is a sequence of controls $(u^\tau_k)\subset  AC(T)$   such that $u^\tau_k(0)=\bar u_0$ and
\begin{itemize}
\item[{(i$_\tau$)}] the sequence $(x^\tau_k)$ of  the  Carath\'eodory solutions $x^\tau_k:=  x[\bar x_0,\bar u_0, u^\tau_k,v]$ to (\ref{E})-(\ref{EO}) is equibounded in $[0,T]$;
 {\vskip 0.2 truecm}
\item[{(ii$_\tau$)}]
$|(x^\tau_k,u^\tau_k)(\tau)- (x,u)(\tau)|+ \|(x^\tau_k,u^\tau_k)- (x,u)\|_{_{L^1(T)}}\to 0$ as $k\to+\infty$.
 {\vskip 0.2 truecm}
\end{itemize}
\item {\sc (S limit solution)}
A limit solution  $x$ is  called a  {\em simple limit solution} of (\ref{E})-(\ref{EO}), shortly   S {\em limit solution},  if the sequences $(u^\tau_k)$ can be chosen independently of $\tau.$ In this case we write $(u_k)$ to refer to the approximating sequence.
\item  {\sc (BVS  limit solution)}\label{sedsdef}
An  S  limit solution  $x$ is called a   BVS  {\em  limit  solution} of (\ref{E})-(\ref{EO})   if   the approximating inputs  $u_k$ have equibounded variation in $[0,T]$.
\end{enumerate}
\end{definition}

For a detailed discussion on the notion of limit solution we refer the reader to \cite{AR}, \cite{AMR}. Here let us just underline that, already the BVS limit solution associated to a control  $(u,v)\in {\mathcal L}^1(T)\times L^1(T)$ is not unique, unless   the system is commutative.  
Moreover,  the sets of limit solutions,  S limit solutions and  BVS  limit solutions form a decreasing sequence of sets. 

The density approach adopted in  Definition \ref{ls}  allows a unified notion of  trajectory (for commutative and non commutative systems with $u$ of possibly unbounded variation), but it does not give any explicit representation formula for the solution. In fact, such a representation exists if either the control system is commutative or  if there are a priori bounds on the variation of  the controls $u$.  In particular, in the latter case \cite{AR}  proves that  BVS  limit  solutions  coincide with graph completion solutions. The graph completion approach  is  traditionally used to study impulsive control systems with bounded variation  on $u$  (see e.g. \cite{BP} and the references therein). It provides a nice representation formula, suitable to derive, for instance  necessary and sufficient optimality conditions for several optimization problems,  both in terms of  Pontrjagin Maximum Principle and of  Hamilton-Jacobi-Bellman equations (see e.g. \cite{SV},  \cite{MiRu}, \cite{KDPS} and  \cite{MR1}, \cite{MS}). In  order to have a representation formula for limit solutions associated to  controls with unbounded variation, in  \cite{MSuv} we singled out the following set  of controls:
$$
\overline{BV}_{loc}(T):= \{ u:[0,T]\to U, \ \ u\in {BV}_{loc}(T)\},
$$
for which we extended the graph completion approach. Precisely,  in  \cite{MSuv} we  introduced graph completions solutions associated to these controls and proved that they coincide  with the following subset of S limit solutions.   

\begin{definition}{\sc (BV$_{loc}$S limit solution)}\label{lsl}
Let   $(\bar x_0,\bar u_0) \in {\mathbb{R}}^n\times U$ and let  $(u,v)\in \overline{BV}_{loc}(T)\times L^1(T)$ with $u(0)=\bar u_0$.
An  S  limit solution  $x$ is called a    BV$_{loc}$S {\em limit  solution} of  (\ref{E})-(\ref{EO}):  
\begin{itemize}
\item[{(i)}]  {\em on $[0,T[$},  if there exists a sequence of controls  $(u_k)$ as in the  definition of  S   limit solution,   such that
 for any $t\in]0,T[$  the approximating inputs  $u_k$ have equibounded variation on $[0,t]$; 
  {\vskip 0.2 truecm}
 \item[{(ii)}]  {\em  on $[0,T]$},  if, moreover, $x$   is bounded and  there exists  a  decreasing map   $\tilde\varepsilon$ with $\lim_{s\to+\infty}\tilde\varepsilon(s)=0$ and  there exist  two strictly increasing, diverging sequences $(\tilde s_j)\subset{\mathbb{R}}_+$,   $(k_j)\subset{\mathbb{N}}$, $k_j\ge j$,  such that,  for every $k>k_j$ there is  $\tau^j_k <T$ with $ \tau^j_k+ Var_{[0,\tau^j_k]}(u_k)=\tilde s_j$ and
\begin{equation}\label{cBVl}
 |(x_k,u_k)(\tau^j_k)- (x_k,u_k)(T)|\le\tilde\varepsilon(j).  
\end{equation} 
\end{itemize}
\end{definition}
The subclass of BV$_{loc}$S limit solutions is  relevant  in controllability issues, like approaching a target set,  and  in  optimization problems with endpoint constraints and certain  running costs lacking coercivity  (see e.g.   Example 3.1 in \cite{MSuv}, involving the Brockett nonholonomic integrator).
 
  \begin{remark}\label{condST}{\rm  Condition (ii) in Definition \ref{lsl} is an {\it equiuniformity condition} on the sequence 
$(x_k,u_k)$ in a neighborhood of the final time $T$.  We point out that without  (\ref{cBVl}),  a BV$_{loc}$S limit  solution $x$ is a BV$_{loc}$ graph completion solution only on $[0,T[$.  Condition (ii)   guarantees the equivalence of the two concepts on the closed interval $[0,T]$ 
(see \cite{MSuv}).  }
\end{remark}
  
To better understand condition (ii) in Definition \ref{lsl}, for any traiectory-control pair $(x,u,v)$  let us introduce the following parametrization of the graph  of $(x,u)$,   useful also  in the sequel.
\begin{definition}[Arc-length parametrization]\label{Dal} 
Let $(u,v)\in  AC(T)\times L^1(T)$ with $u(0)=\bar u_0$   and  set $x:=x[\bar x_0,\bar u_0,u,v]$.   We call {\rm arc-length   graph-parametrization} of  the trajectory-control pair $(x,u,v)$,  the element  $(\xi,\varphi_0,\varphi,\psi, S)$ defined by  \footnote{ Since  every $L^1$ equivalence class contains  Borel measurable representatives, here and in the sequel we  tacitly assume that the maps $v$ and $\psi$ are Borel measurable,   when necessary.}
\begin{equation}\label{Rep}
\begin{array}{l}
 \sigma(t) {:=}  \int_0^t (1 +|\dot u(\tau)|) d\tau \quad\forall t\in[0,T], \  \ S:= \sigma(T)  \\ \, \\
\varphi_0{:=}  \sigma^{-1},\ \ \varphi{:=} u\circ \varphi_0,  \ \ \psi{:=}  v \circ \varphi_0 ,  \ \xi:= x\circ\varphi_0.
\end{array}
\end{equation}
Of course,  $(\xi,\varphi,\psi)\circ\sigma=(x,u,v)$.
\end{definition}

Notice that, given   $(\xi,\varphi_0,\varphi,\psi,S)$ defined as above, $(\varphi_0,\varphi)(0)=(0,\bar u_0)$, $\varphi_0(S)=T$ and   $\xi$ solves    the  following control system 
\begin{equation}\label{SEaff}
\left\{
\begin{array}{l}
\xi'(s)  =  g_0(\xi,\varphi,\psi) \varphi_0'(s)+ \sum_{i=1}^m {  g}_i(\xi,\varphi) {\varphi'_i}(s)  \quad s\in]0,S[, \\\,\\
 \xi(0) =\bar x_0. 
\end{array}
\right.
\end{equation}
Here the apex  `\,$'$\,'  denotes differentiation with respect to the parameter $s$, in order to distinguish it from the time differentiation, denoted by a dot.

\noindent Differently from the original solution $x$, which is  defined on the fixed time interval  $[0,T]$ and depends on an unbounded   control derivative $\dot u$,  the map $\xi$ is defined on a control-dependent interval  $[0,S]$ with $S=T+Var_{[0,T]}(u)\ge T$  but with $( \varphi_0',\varphi')$ bounded valued, since  $\varphi'_0+|\varphi'|=1$ a.e. in $[0,S]$.
  
\noindent Condition (ii) in Definition \ref{lsl}  is more meaningful once we read it as an hypothesis on  the graphs of the approximating sequence $(x_k,u_k)_k$. Precisely,  for any trajectory-control pair $(x_k,u_k,v)$ as in  Definition \ref{lsl}, let $(\xi, \varphi_{0_k},\varphi_k, v\circ \varphi_{0_k}, S_k)$ be its arc-length graph parametrization (see Definition \ref{Dal}).  Then    (ii)   is equivalent to: 

\noindent {\it the existence of   a positive, decreasing map  $\tilde\varepsilon$ with $\lim_{s\to+\infty}\tilde\varepsilon(s)=0$ and  of two strictly increasing, diverging sequences $(\tilde s_j)\subset {\mathbb{R}}_+$  and  $(k_j)\subset {\mathbb{N}}$, $k_j\ge j$,  such that,  for every $k>k_j$: 
\begin{equation}\label{cBVe}
|(\xi_k,\varphi_k)(\tilde s_j)- (\xi_k,\varphi_k)(S_k)|\le\tilde\varepsilon(j).
\end{equation}}
Clearly, (\ref{cBVe}) holds true when the sequence $(\xi_k,\varphi_k)$ is uniformly convergent on $ {\mathbb{R}}_+$ (by considering, for every $k$,  the  extension  $(\xi_k,\varphi_k)(s):=(\xi_k,\varphi_k)(S_k)$ for every $s\ge S_k$).


\section{Extended limit solution}\label{mr}
Motivated by Examples \ref{Ex3}, \ref{Ex3bis},  we extend here the  notions  of  limit solution  given in \cite{AR},  \cite{MSuv}, by approximating in $L^1$ the ordinary control $v$, which in  the original definitions  was kept fixed.  Furthermore, in Theorem \ref{Pvk} we prove that extended and original $BVS$  and $BV_{loc}S$ limit solutions, respectively, coincide.  Hence the results in \cite{AR}, \cite{AMR} and in \cite{MSuv}, dealing with $BVS$  and $BV_{loc}S$ limit solutions, remain unchanged in the new extended framework.

\begin{definition}[{\sc Extended limit solutions}]\label{edsdef}   
Let   $(\bar x_0,\bar u_0) \in {\mathbb{R}}^n\times U$ and let  $(u,v)\in {\mathcal L}^1(T)\times L^1(T)$ with $u(0)=\bar u_0$.
\begin{enumerate}
\item {\sc (E-Limit Solution)} A map $x\in{\mathcal L}^1([0,T], {\mathbb{R}}^n)$ is called an {\em extended limit solution,}  shortly E-{\em limit solution,} of the Cauchy problem (\ref{E})-(\ref{EO}) corresponding to  $(u,v)$
 if, {\em for every $\tau \in [0,T],$}  there is a sequence of controls $(u^\tau_k, v^\tau_k)\subset  AC(T)\times L^1(T)$   such that $u^\tau_k(0)=\bar u_0$ and
  {\vskip 0.2 truecm}
\begin{itemize}
\item[{(i$_\tau$)}] the sequence $(x^\tau_k)$ of  the  Carath\'eodory solutions $x^\tau_k:=  x[\bar x_0,\bar u_0, u^\tau_k,v^\tau_k]$ to (\ref{E})-(\ref{EO}) is equibounded on $[0,T]$;
 {\vskip 0.2 truecm}
\item[{(ii$_\tau$)}]
$|(x^\tau_k,u^\tau_k)(\tau)- (x,u)(\tau)|+ \|(x^\tau_k,u^\tau_k,v^\tau_k)- (x,u,v)\|_{_{L^1(T)}}\to 0$ as $k\to+\infty$.
\end{itemize}
 {\vskip 0.2 truecm}
\item {\sc (E-S limit solution)}
A limit solution  $x$ is called an  E-{\em simple limit solution} of (\ref{E})-(\ref{EO}), shortly  E-S   {\em limit solution},  if the sequences $(u^\tau_k, v^\tau_k)$ can be chosen independently of $\tau.$ In this case we write $(u_k, v_k)$ to refer to the approximating sequence.
\item {\sc (E-BVS  limit solution)}\label{sedsdef}
An E-S  limit solution  $x$ is  called an   E-BVS  {\em  limit  solution}, of (\ref{E})-(\ref{EO})   if   the approximating inputs  $u_k$ have equibounded variation on $[0,T]$.
\end{enumerate}
\end{definition}
 
\begin{definition}[{\sc  Extended BV$_{loc}$S limit solution}]\label{edsdefnew} 
Let  $(\bar x_0,\bar u_0) \in {\mathbb{R}}^n\times U$ and let  $(u,v)\in  \overline{BV}_{loc}(T))\times L^1(T)$ with $u(0)=\bar u_0$.
  An E-S  limit solution  $x$ is called an {\em extended}  BV$_{loc}$S {\em limit  solution,}  shortly  E-BV$_{loc}$S {\em limit  solution,}  of  (\ref{E})-(\ref{EO}):  
\begin{itemize}
\item[{(i)}]  {\em  on $[0,T[$},  if there exist a sequence of controls  $(u_k,v_k)$ as in the  definition of an E-S   limit solution,   such that
 for any $t\in]0,T[$  the approximating inputs  $u_k$ have equibounded variation on $[0,t]$; 
  {\vskip 0.2 truecm}
 \item[{(ii)}]  {\em  on $[0,T]$},  if, moreover, $x$   is bounded and  there exists  a  decreasing map   $\tilde\varepsilon$ with $\lim_{s\to+\infty}\tilde\varepsilon(s)=0$ and  there exist  two strictly increasing, diverging sequences $(\tilde s_j)\subset{\mathbb{R}}_+$,   $(k_j)\subset{\mathbb{N}}$, $k_j\ge j$,  such that,  for every $k>k_j$ there is  $\tau^j_k <T$ with $ \tau^j_k+ Var_{[0,\tau^j_k]}(u_k)=\tilde s_j$ and
\begin{equation}\label{cBVl1}
 |(x_k,u_k)(\tau^j_k)- (x_k,u_k)(T)|\le\tilde\varepsilon(j).  
\end{equation} 
\end{itemize}
\end{definition}

Analogously to the case of limit solutions, the extended limit solution associated to a control   $(u,v)\in {\mathcal L}^1(T)\times L^1(T)$ is not unique, unless  the system is commutative; moreover  the sets of E-limit solutions, E-S, E-BV$_{loc}$S, and E-BVS   limit solutions are a decreasing sequence of sets.

\begin{theorem}\label{Pvk}  Let $T>0$, $(\bar x_0,\bar u_0)\in {\mathbb{R}}^n\times U$ and let  $(u,v)\in {\mathcal L}^1(T)\times L^1(T)$ be such that $u(0)=\bar u_0$.  Then 
  a map   $x:[0,T]\to{\mathbb{R}}^n$ is an   E-BVS limit solution [resp.  E-BV$_{loc}$S limit solution]   corresponding to  $(u,v)$ if and only if it is a  BVS  limit solution [resp. BV$_{loc}$S limit solution]  corresponding to the same input.
  \end{theorem}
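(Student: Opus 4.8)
The plan is to prove the equivalence in each direction, exploiting that the only difference between the extended and the original notions is that in the extended version the ordinary control $v$ is approximated in $L^1$ rather than held fixed. One implication is immediate: every BVS limit solution [resp.\ BV$_{loc}$S limit solution] is \emph{a fortiori} an E-BVS limit solution [resp.\ E-BV$_{loc}$S limit solution], simply by choosing the constant sequence $v_k\equiv v$ in Definition \ref{edsdef}; all the required conditions, including the equibounded variation of $(u_k)$ and, in the BV$_{loc}$ case, condition \eqref{cBVl1}, reduce verbatim to their counterparts in Definitions \ref{ls} and \ref{lsl}. So the substantive content is the converse: an E-BVS limit solution $x$ associated to $(u,v)$, approximated by regular pairs $(u_k,v_k)$ with $Var_{[0,T]}(u_k)\le C$ and $v_k\to v$ in $L^1$, must in fact already be a BVS limit solution, i.e.\ it can be reapproximated by regular pairs $(u_k',v)$ with $v$ fixed and $Var_{[0,T]}(u_k')$ still equibounded.

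The key tool is the arc-length graph reparametrization of Definition \ref{Dal} together with a compactness argument. Given the approximating sequence $(x_k,u_k,v_k)$, pass to the parametrizations $(\xi_k,\varphi_{0_k},\varphi_k,\psi_k,S_k)$ with $\psi_k=v_k\circ\varphi_{0_k}$. Since $S_k=T+Var_{[0,T]}(u_k)\le T+C$ is bounded and the data $(\varphi_{0_k}',\varphi_k')$ satisfy $\varphi_{0_k}'+|\varphi_k'|=1$ a.e., the Lipschitz maps $(\xi_k,\varphi_{0_k},\varphi_k)$ (extended constantly past $S_k$ to a common interval $[0,T+C]$) are equibounded and equi-Lipschitz, so by Ascoli–Arzel\`a, up to a subsequence, they converge uniformly to some $(\xi,\varphi_0,\varphi)$, with $(\varphi_{0_k}',\varphi_k')\rightharpoonup(\varphi_0',\varphi')$ weak-$*$ in $L^\infty$; and $\psi_k\rightharpoonup\psi$ weakly in $L^1$ for some $\psi$ with values in the convex hull of $V$ — here is where the reparametrized picture is essential, because the $L^1$ convergence of $v_k$ to $v$ on $[0,T]$ transfers to an $L^1$-type control on the $\psi_k$ after reparametrization, using the uniform bound on the $S_k$ and on the Radon–Nikodym derivatives. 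Passing to the limit in the integral form of \eqref{SEaff}, one identifies $\xi$ as a solution of the limit reparametrized system driven by $(\varphi_0',\varphi',\psi)$, and one checks that the original-time restriction, i.e.\ $(\xi,\varphi,\psi)\circ\sigma$ with $\sigma$ the inverse of $\varphi_0$, reproduces $(x,u,v)$ on $[0,T]$ — this is exactly the content that $x$ is a graph completion solution in the sense recalled before Definition \ref{lsl}, and by the equivalence established in \cite{AR} [resp.\ \cite{MSuv}] between BVS [resp.\ BV$_{loc}$S] limit solutions and graph completion solutions, $x$ is a BVS [resp.\ BV$_{loc}$S] limit solution.

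The main obstacle is handling the coupling between the weak-$*$ limit of $\varphi_{0_k}'$ and the weak limit of $\psi_k=v_k\circ\varphi_{0_k}$: a product of two weakly (not strongly) convergent sequences need not converge to the product of the limits, so showing that the term $g_0(\xi_k,\varphi_k,\psi_k)\,\varphi_{0_k}'$ passes to the limit correctly requires care. The way around this is that $\varphi_{0_k}'$ takes values in $[0,1]$ and one does \emph{not} need to identify its weak limit separately; instead one works with the pair $(\varphi_{0_k}',\varphi_k',\psi_k\,\varphi_{0_k}')$ as a single $L^1$-bounded triple, uses that $g_0$ is continuous and the $\xi_k,\varphi_k$ converge uniformly, and invokes a standard convexity/closure theorem (Filippov–type, or lower semicontinuity of integral functionals along weakly convergent controls) on the joint relaxed system; the uniform convergence of the state components $(\xi_k,\varphi_k)$ is what makes the nonlinear state-dependence harmless. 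In the BV$_{loc}$ case there is the additional, but routine, task of verifying that condition \eqref{cBVl1}, being a condition only on $(x_k,u_k)$, is preserved when one replaces $v_k$ by the fixed $v$ in the reconstructed approximating sequence; this is immediate once the reconstruction keeps the same (or an equi-close) sequence $(u_k)$, which the argument above does.
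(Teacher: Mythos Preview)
Your plan takes a substantially more indirect route than the paper's, and it has a genuine gap at the step you yourself flag as the main obstacle.

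The paper's proof of the BVS case is a one-line Gronwall estimate: keep the \emph{same} sequence $(u_k)$, replace $v_k$ by the fixed $v$, set $\hat x_k:=x[\bar x_0,\bar u_0,u_k,v]$, and observe that
\[
|\hat x_k(t)-x_k(t)|\le \Big(\int_0^t\omega(|v_k-v|)\,dt'\Big)\,e^{(m+1)L\big(t+\int_0^t|\dot u_k|\,dt'\big)}.
\]
The exponent is bounded because $Var_{[0,T]}(u_k)\le K$, and the prefactor tends to $0$ along a subsequence where $v_k\to v$ a.e.\ (dominated convergence). Hence $\hat x_k\to x$ pointwise, and $x$ is a BVS limit solution for the input $(u,v)$ with the very same $(u_k)$. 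No reparametrization, no compactness, no graph-completion detour.

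Your compactness route could in principle be pushed through, but not via the ``Filippov-type closure'' you invoke. A closure/relaxation theorem tells you only that the uniform limit $\xi$ solves a \emph{differential inclusion} with convexified right-hand side; it does not identify the limiting control as $\psi=v\circ\varphi_0$. Yet that identification is exactly what you need in order to say that the resulting graph-completion solution is associated to the \emph{given} $(u,v)$ --- otherwise you only get that $x$ is a graph-completion solution for \emph{some} ordinary control, which is useless. The correct way to pass to the limit in the $g_0$-term is again a direct estimate (change variables $t=\varphi_{0_k}(s)$, use $\|v_k-v\|_{L^1}\to0$ and approximate $v$ by continuous functions to handle $v\circ\varphi_{0_k}-v\circ\varphi_0$); but once you do that computation you will see it is essentially the same Gronwall bound as above, dressed in reparametrized clothing. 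Your claim that the ``reconstruction keeps the same (or an equi-close) sequence $(u_k)$'' is also false as stated: going through the equivalence theorems of \cite{AR}, \cite{MSuv} produces a \emph{new} approximating sequence, not the original one.

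For the BV$_{loc}$S case your assertion that the extra work is ``routine'' underestimates the difficulty. Here $Var_{[0,T]}(u_k)\to+\infty$, so the exponential factor in the Gronwall bound blows up as $t\to T$ and the simple argument only gives $\hat x_k(t)\to x(t)$ for $t<T$. Recovering the endpoint requires a genuine construction: the paper uses condition \eqref{cBVl1} together with the Whitney property of $U$ to \emph{modify} $u_k$ on a small terminal interval $[\tau^j_k,T]$, replacing the tail of $u_{k}$ by a short Whitney arc joining $u_k(\tau^j_k)$ to $u(T)$, and then shows via a four-term telescoping estimate that the modified trajectories converge also at $T$. None of this appears in your plan.
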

\begin{proof} \label{Confr}

The `` if'' part is obvious for both cases. Let us prove the ``only   if'' part.
 
{\vskip 0.2 truecm} 
\noindent{\sc \underline{Case 1}:} Let    $x$ be  an E-BVS limit solution corresponding to $(u,v)$ and let $(u_k, v_{k})$ and $(x_k)$ be as in Definition \ref{edsdef}, so that, in particular,  there is some constant  $K>0$  such that $Var_{[0,T]}(u_k)\le K$ for every $k$. Then, setting $\hat x_k:=x[\bar x_0,\bar u_0,u_k,v]$, by standard estimates  it follows that
\begin{equation}\label{eqv_k}
|x_k(t)|, \  |\hat x_k(t)|   \le R'
\end{equation}
with $R':=[|\bar x_0|+(m+1)M(T+K)]{\rm e}^{(m+1)M (T+K)}.$
 Let us denote by $\omega$ and $L$ a modulus of continuity of $g_0$ and a Lipschitz constant (in $(x,u)$) for the vector fields $g_i$, $i=0,\dots m$ when $|x|\le  R'$, respectively.  Gronwall's Lemma yields that
\begin{equation}\label{eqv_k2}\begin{array}{l}
 |\hat x_k(t)-x_k(t)|\le \\\left(\int_0^t \omega(|v_k(t')-v(t')|) \,dt'\right) \,\,e^{(m+1)L\left(t+\int_0^t|\dot u_k(t')|\,dt'\right)}. 
\end{array}\end{equation}
Since there exists a subsequence of $(v_k)$ such that $v_k(t)\to v(t)$ a.e. in $[0,T]$ and $v$, $v_k$ take values in the compact set $V$,  the 
Dominated Convergence Theorem and the continuity of $\omega$ let  us conclude that,  for such a subsequence, 
\begin{equation}\label{omega1}
\int_0^T\omega(|v_k(t)-v(t)|)\,dt\to 0,\quad{\rm as} \,\, k\to+\infty
\end{equation}
so that 
$\lim_k|\hat  x_{k}(t)-x_{k}(t)|=0$  for every  $t\in [0,T]$. Therefore, $\lim_k \hat x_k(t)=\lim_k x_k(t)=x(t)$  for any  $t\in[0,T]$ and $x$ is a $BVS$  limit solution corresponding to $(u,v).$

{\vskip 0.2 truecm} 
\noindent{\sc \underline{Case 2}:} Let  now $x$ be  an  E-BV$_{loc}$S, not  E-BVS, limit solution  and let $(u_k, v_{k})$,  $(x_k)$, $(k_j)$  and $(\tilde s_j)$ be as in Definition \ref{edsdefnew}.  For every $k$, set $V_k: =Var_{[0,T]}(u_k)$ and assume that  $(V_k)$  is increasing and diverging. By (i) in Definition \ref{edsdefnew}  there exists an increasing function  $V:[0,T[\to{\mathbb{R}}_+$   with $V(0)=0$, $\lim_{t\to T}V(t)=+\infty$ and  such that, for every $k$, 
$$
Var_{[0,t]}(u_k)\le V(t) \qquad \rm{for\, every }\   t\in]0,T[.
$$
Then by the proof of Case 1 we derive that  
$$
\hat x_k(t):=x[\bar x_0,\bar u_0,u_k,v](t)\to x(t) \quad\rm{ for \,every }\  t\in[0,T[.
$$ 
To handle the convergence at  $t=T$, we use part  (ii) of the definition of  E-BV$_{loc}$S limit solution.
  Let us  introduce, for every $k$,  the arc-length graph parametrizations $(\xi_k,\varphi_{0_k}, \varphi_k,v_k\circ\varphi_{0_k},T+V_k)$ and $(\hat\xi_k,\varphi_{0_k}, \varphi_k,v\circ\varphi_{0_k},T+V_k)$ of $(x_k, u_k,v_k)$ and $(\hat x_k, u_k,v)$, respectively (see Definition \ref{Dal}).  Let us suppose that these  arc-length graph parametrizations are extended to $[T+V_k,+\infty[$  by the  constant value assumed at $T+V_k$. 
By assumption,  there exists a constant $R>0$ such that  
$$
\sup_{s\in{\mathbb{R}}_+}|\xi_k(s)|=\sup_{t\in[0,T]}|x_k(t)|\le R \qquad \rm{for \,every }\  {\it k}
$$
and, recalling that  $\varphi'_{0_k}(s)+|\varphi_k'(s)|\le1$  a.e.,  standard estimates imply that  for any $j$   there is some  $R_j>0$ such that 
$$
\sup_{s\in[0,\tilde s_j]}|\hat\xi_k(s)| \le R_j \qquad \rm{for \,every} \   {\it k}.
$$
Let $\omega_{j}$ and $L_j$ be a modulus of continuity of $g_0$ and a Lipschitz constant (in $(x,u)$)  of the vector fields $g_i,\, i=0,\dots,m$  for $|x|\le \max\{R,R_j\}$, respectively.   Gronwall's Lemma yields,  for every $k$, 
  \begin{equation}\label{x4}\begin{array}{l}
\sup_{t\in[0,\tau^j_k]}|\hat x_k(t)-x_k(t)|= \sup_{[0,\tilde s_j]} |\hat\xi_k(s)-\xi_k(s)|\le\\\,\\
\int_0^{\tilde s_j}\omega_{j}(|(v_k-v)\circ\varphi_{0_k}(r)|)\varphi'_{0_k}(r)\,dr\,\cdot\\
\quad\quad\quad\quad\quad\quad e^{(m+1)L_j\int_0^{\tilde s_j}
 (\varphi'_{0_k}(r)+|\varphi'_k(r)|)\,dr}\le\\\,\\
\int_0^T\omega_{j}(|v_k(t)-v(t)|)\,dt\,\,e^{(m+1)L_j\tilde s_j}=:\varepsilon^2_j(k)
\end{array}
\end{equation}
 with $\varepsilon^2_j(k)\le \varepsilon^2_{j+1}(k)$. Passing to a suitable subsequence of $(v_k)$, still denoted by $(v_k)$, as in (\ref{omega1})  we  have that,  for every fixed  $j$,  $\lim_k\varepsilon^2_j(k)=0$.  Now we can construct a sequence $(k^1_j)$, with $k^1_j\ge k_j$, such that  
\begin{equation}\label{x4}\varepsilon^2_j(k)\le1/j\quad{\rm for\,\,all\,\,}k\ge k^1_j.\end{equation}
 
  In particular,   this implies that, for some $\hat R>0$,    
$$
\sup_{[0,\tau^j_k]}|\hat x_{k}|\le  \hat R  \qquad \forall k\ge k^1_j.
$$
Since $\lim_k V_k=+\infty$, we need to modify the  sequence $(\hat x_k,\hat u_k)$ using the Whitney property.  Precisely, we set $\tau^j:=\tau^j_{k^1_j}$  and 
\begin{equation}\label{Wu}
\begin{array}{l}
\check u_j:=\hat u_{k^1_j}(t)\chi_{[0,\tau^j[}(t)+{\tilde u}_j\left(\frac{t-\tau^j}{T-\tau^j}\right)\chi_{]\tau^j,T]},  \\ \, \\
\check x_j:=x[\bar x_0,\bar u_0,\check u_j,v],
\end{array}
\end{equation}
 where  ${\tilde u}_j\in AC(1)$ joins  $\hat u_{k^1_j}(\tau^j)=\varphi_j(s_j)$ to  $u(T)$ and $Var_{[0,1]}{\tilde u}_j\le C| \varphi(s_j)- u (T)|$.
 We have $\check x_{j}(\tau^j)=\hat x_{k^1_j}(\tau^j),$ and by standard estimates it follows that
 $\sup_{t\in[0,T]}|\check  x_j(t)|\le \check  R$  for some $\check R>0,$ and   
\begin{equation}\label{hats}
|\check  x_j(T)- \check x_{j}(\tau^j)|\to 0\quad{\rm as}\,\,j \to+\infty.
\end{equation}
Hence by (\ref{x4}), {\rm(\ref{cBVl})}  and (\ref{x4}) we get
\begin{equation}\label{iii}\begin{array}{l}
|\check  x_j(T)- x(T)|\le |\check  x_j(T)- \check  x_{j}(\tau^j)|+|\hat  x_{k^1_j}(\tau^j)- x_{k^1_j}(\tau^j)|+\\\,\\|x_{k^1_j}(\tau^j)-x_{k^1_j}(T)|+ |x_{k^1_j}(T)- x(T)|\le\\\,\\
|\check  x_j(T)- \check  x_{j}(\tau^j)|+\frac{1}{j}+\tilde\varepsilon(j) + |x_{k^1_j}(T)- x(T)|.
  \end{array}
\end{equation}
The r.h.s. of (\ref{iii}) approaches $0$ since  by (\ref{hats})   its first term goes to $0$ and, being
$x$ an E-BV$_{loc}$S limit solution,  the  last term  approaches $0$ too. Therefore, renaming  the index $j$ in the sequence $(\check x_j,\check u_j)$ by $k$, it is not difficult to prove that the sequence  $(\check x_k,\check u_k)$   verifies statements (i) and (ii)  and, by (\ref{iii}),  also (ii) of Definition \ref{edsdef}. 
\end{proof}  


\section{A further extension}\label{Sfex}
For $u$  with bounded variation, the graph completion technique has been extended  since the 90s   to control systems of the form
\begin{equation}\label{Ev}
 \dot x (t)= g_0(x(t),u(t),v(t)) +  \sum_{i=1}^m  {g}_i(x(t),u(t),v(t))\,\dot u_i(t) \quad \text{a.e. $t\in[0,T]$,} 
 \end{equation}
\begin{equation}\label{EOv} 
x(0)=\bar x_0,  \quad u(0)=\bar u_0,
\end{equation}
 where the dependence on the ordinary control $v$ appears also in the coefficients $g_1, \dots, g_m$ of the control derivatives $\dot u_i$. This notion   
 has been applied to several problems  (see  \cite{MR},   \cite{MiRu},  \cite{KDPS}  and the references therein). As mentioned in \cite{AR},   this  kind of equation  is  relevant  in mechanical applications, for instance,  when $u$ is a shape parameter and $v$ is a control representing an external force or torque  and in min-max control problems where  the adjoint equations may contain a $v$-dependent term multiplied by an unbounded control, like in \eqref{Ev} (see e.g. \cite{BI}). In this section we  adapt the notion of extended  BVS  limit solution introduced in Definition \ref{edsdef} to   \eqref{Ev},  \eqref{EOv}  and  in Theorem \ref{Th2} below we  prove the one-to-one correspondence  between such  limit solutions and  graph completion solutions to \eqref{Ev},  \eqref{EOv}.  In this way we extend  the result  of \cite[Thm. 4.2]{AR}, where the same assertion  is proved for  $g_1,\dots,g_m$ independent of $v$.
 {\vskip 0.2 truecm}
 Throughout this section we assume that for every $i=0,\dots,m$,  the control vector field  $g_i: {\mathbb{R}}^n \times U \times V\to{\mathbb{R}}^n$  is continuous, $ (x,u) \mapsto  g_i(x,u,v)$
 is locally Lipschitz on ${\mathbb{R}}^n \times U$ uniformly in $v \in V$ and there exists $M>0$ such that
  $$
  \left|g_i(x,u,v)\right| \leq  M(1+|(x,u)|) \quad \forall  (x,u,v) \in {\mathbb{R}}^n\times U\times V.
  $$
The notion of  extended BVS limit solution to  \eqref{Ev},  \eqref{EOv} that we are going to introduce  coincides with the Definition \ref{edsdef}, 3.,   for   $g_1,\dots,g_m$ not depending on $v$, but the presence of   the ordinary control in the $g_i$ for $i=1,\dots,m$  requires to take into account the interplay between $u$ and $v$. We distinguish the two situations ($v$ just in the drift or $v$ `everywhere') by considering  the more general control system 
\begin{equation}\label{Evg}
 \dot x (t)= g_0(x(t),u(t),v_1(t)) +  \sum_{i=1}^m  {g}_i(x(t),u(t),v_2(t))\,\dot u_i(t) \quad \text{a.e. $t\in[0,T]$,} 
 \end{equation}
with $v:=(v_1,v_2)$ taking values in $V\times V$. For simplicity, we use the same notation of  Definition \ref{edsdef} and still  denote by $x[\bar x_0,\bar u_0,u,v]$ a regular solution to \eqref{Evg}, \eqref{EOv} associated to $(u,v)=(u,v_1,v_2)\in AC(T)\times L^1(T)\times L^1(T)$. 

\begin{definition}[{\sc Extended BVS limit solution}]\label{edsdefv}   
Let   $(\bar x_0,\bar u_0) \in {\mathbb{R}}^n\times U$ and let  $(u,v)=(u,v_1,v_2)\in {\mathcal L}^1(T)\times L^1(T)\times L^1(T)$ with $u(0)=\bar u_0$.
\begin{enumerate}[]
\item  A map $x\in{\mathcal L}^1([0,T], {\mathbb{R}}^n)$ is called an {\em extended} BVS {\em limit solution,}  shortly E-BVS {\em limit solution,} of the Cauchy problem \eqref{Evg}-\eqref{EOv} corresponding to  $(u,v)$
 if  there is a sequence of controls $(u_k, v_{k})=(u_k, v_{1_k}, v_{2_k})\subset  AC(T)\times L^1(T)\times L^1(T)$   such that $u_k(0)=\bar u_0$, the approximating inputs  $u_k$ have equibounded variation on $[0,T]$ and 
  {\vskip 0.2 truecm}
\begin{itemize}
\item[(i)] the sequence $(x_k)$ of  the  Carath\'eodory solutions $x_k:=  x[\bar x_0,\bar u_0, u_k,v_k]$ to \eqref{Evg}-\eqref{EOv} verifies for every $\tau\in[0,T]$, 
 {\vskip 0.2 truecm}
$|(x_k,u_k)(\tau)- (x,u)(\tau)|+ \|(x_k,u_k,v_k)- (x,u,v)\|_{_{L^1(T)}}\to 0$ as $k\to+\infty$;
{\vskip 0.2 truecm}
\item[{(ii)}] there is some $\psi_2\in L^1({\mathbb{R}}_+, V)$ such that, setting $\sigma_k(t):= t+Var_{[0,t]}(u_k)$,   $V_k:=$Var$_{[0,T]}(u_k)$, one has  
$\|(v_{2_k}\circ(\sigma_k)^{-1}-\psi_2)\,\chi_{_{[0,T+V_k]}}\|_{_{L^1({\mathbb{R}}_+)}}\to 0$  as $k\to+\infty$.
\end{itemize}
  \end{enumerate}
\end{definition}  
 
 \begin{theorem}\label{Th2} A map $x:[0,T]\to{\mathbb{R}}^n$ is a  {\rm E-BVS}-limit solution to  \eqref{Evg}-\eqref{EOv} associated to $(u,v)\in BV(T)\times L^1(T)\times L^1(T)$ with $u(0)=\bar u_0$ if and only if it is a graph completion solution to  \eqref{Evg},  \eqref{EOv} associated to the same control. 
\end{theorem}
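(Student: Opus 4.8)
The plan is to prove the two implications separately, in each case going through the arc-length parametrization of Definition~\ref{Dal}, which turns the impulsive system \eqref{Evg} into the parametrized form \eqref{SEaff} (now with $g_i=g_i(\xi,\varphi,\psi_2)$ in the non-drift terms). Recall that a graph completion solution of \eqref{Evg}, \eqref{EOv} associated with $(u,v_1,v_2)\in BV(T)\times L^1(T)\times L^1(T)$ is the map $x(t)=\xi(s)$, $\varphi_0(s)=t$, built from a Lipschitz pair $(\varphi_0,\varphi):[0,S]\to[0,T]\times U$ with $\varphi_0$ nondecreasing and surjective, $\varphi_0(0)=0$, $\varphi_0'+|\varphi'|=1$ a.e., whose graph completes that of $u$, and from measurable controls $\psi_1,\psi_2:[0,S]\to V$ with $\psi_j=v_j\circ\varphi_0$ on $\{\varphi_0'>0\}$, where $\psi_2$ on the ``jump set'' $\{\varphi_0'=0\}$ is an extra free datum and $\xi$ solves \eqref{SEaff}. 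The free control $\psi_2$ on $\{\varphi_0'=0\}$ is exactly what records the values of $v_2$ during the (instantaneous in real time) jumps of $u$.

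For the ``only if'' part I would start from $(u_k,v_{1,k},v_{2,k})$, $x_k$ as in Definition~\ref{edsdefv}, with $Var_{[0,T]}(u_k)\le K$, and pass to their arc-length graph-parametrizations $(\xi_k,\varphi_{0,k},\varphi_k,\psi_{1,k},\psi_{2,k},S_k)$, extended past $S_k=T+Var_{[0,T]}(u_k)\le T+K$ by their final value. By $\varphi_{0,k}'+|\varphi_k'|=1$ a.e.\ and the sublinearity in \textbf{(H0)}, the triples $(\xi_k,\varphi_{0,k},\varphi_k)$ are equibounded and equi-Lipschitz, so along a subsequence they converge uniformly to some $(\xi,\varphi_0,\varphi)$, the derivatives converge weakly-$*$ in $L^\infty$, $S_k\to S$, and $\varphi_0$ is nondecreasing, surjective onto $[0,T]$, $\varphi_0(0)=0$. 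The decisive remark is that $\psi_{2,k}=v_{2,k}\circ\varphi_{0,k}$, so condition (ii) of Definition~\ref{edsdefv} says precisely that $\psi_{2,k}\to\psi_2$ \emph{strongly} in $L^1$. One then passes to the limit in the integral form of \eqref{SEaff}: in the drift term, the substitution $t=\varphi_{0,k}(r)$ rewrites $\int_0^s g_0(\xi_k,\varphi_k,\psi_{1,k})\varphi_{0,k}'\,dr$ as $\int_0^{\varphi_{0,k}(s)}g_0(x_k,u_k,v_{1,k})\,dt$, which converges by pointwise convergence of $(x_k,u_k)$, a.e.\ convergence of $v_{1,k}$ (along a further subsequence, from $v_{1,k}\to v_1$ in $L^1(T)$), continuity and boundedness of $g_0$ and dominated convergence — only $\{\varphi_0'>0\}$ matters here, which is why $L^1(T)$-convergence of $v_{1,k}$ suffices for the drift; in each non-drift term $\int_0^s g_i(\xi_k,\varphi_k,\psi_{2,k})\varphi_{i,k}'\,dr$, the factor $g_i(\xi_k,\varphi_k,\psi_{2,k})$ converges \emph{strongly} in $L^1$ (uniform convergence of $\xi_k,\varphi_k$, strong $L^1$-convergence of $\psi_{2,k}$, continuity and boundedness of $g_i$) against $\varphi_{i,k}'$, converging \emph{weakly-$*$}, so the product integral passes to the limit. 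Hence $\xi$ solves \eqref{SEaff} with $\psi_1:=v_1\circ\varphi_0$ and the limit $\psi_2$; one checks that $(\varphi_0,\varphi)$ completes the graph of $u$ (from $u_k\to u$ pointwise and $\varphi_k=u_k\circ\varphi_{0,k}$) and that $x=\lim_k x_k$ is the graph completion solution of $(\xi,\varphi_0,\varphi,\psi_1,\psi_2)$ — at a continuity point $t$ of $u$ one has $\sigma_k(t)\to\varphi_0^{-1}(t)$, hence $x(t)=\xi(\varphi_0^{-1}(t))$, while at the (at most countably many) jump times the value $x(\tau)=\lim_k\xi_k(\sigma_k(\tau))$ lies on the arc $\xi(\varphi_0^{-1}(\tau))$, which is the admissible choice.

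For the ``if'' part I would construct the approximants from the given data by smoothing the parametrization. Let $\varphi_{0,k}$ be the rescaling to $[0,T]$ of $s\mapsto\varphi_0(s)+\frac1k\int_0^s|\varphi'(r)|\,dr$: it is strictly increasing, $\varphi_{0,k}\to\varphi_0$ uniformly, $\varphi_{0,k}'\to\varphi_0'$ in $L^1$, and $\varphi_{0,k}'=O(1/k)$ on $\{\varphi_0'=0\}$. Set $u_k:=\varphi\circ\varphi_{0,k}^{-1}\in AC(T)$ and $v_{j,k}:=\psi_j\circ\varphi_{0,k}^{-1}$; since $Var_{[0,T]}(u_k)=Var_{[0,S]}(\varphi)=S-T$, the variations are equibounded. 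The reparametrized trajectory $\zeta_k:=x_k\circ\varphi_{0,k}$ then solves \eqref{SEaff} with $\varphi_{0,k}'$ in place of $\varphi_0'$ and the \emph{same} $\varphi',\psi_1,\psi_2$, so Gronwall's Lemma and \textbf{(H0)} give $\zeta_k\to\xi$ uniformly and hence $x_k=\zeta_k\circ\varphi_{0,k}^{-1}\to x$ pointwise (using $\varphi_{0,k}^{-1}(t)\to\varphi_0^{-1}(t)$ at continuity points of $u$); a change of variables, together with the observation that the images $\varphi_{0,k}(\{\varphi_0=t\})$ of the level sets shrink to points, gives $v_{j,k}\to v_j$ in $L^1(T)$. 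Condition (ii) is the point to check: the arc-length clock $\sigma_k$ of $u_k$ satisfies $\sigma_k(\varphi_{0,k}(s))=\varphi_{0,k}(s)+\int_0^s|\varphi'(r)|\,dr$ for all $s$, and the right-hand side tends to $\varphi_0(s)+\int_0^s|\varphi'(r)|\,dr=s$ because $\varphi_0'+|\varphi'|=1$; so the induced reparametrization $\mu_k(s):=\varphi_{0,k}(s)+\int_0^s|\varphi'(r)|\,dr$ tends to the identity with derivatives bounded above and below, whence $v_{2,k}\circ\sigma_k^{-1}=\psi_2\circ\mu_k^{-1}\to\psi_2$ in $L^1(\mathbb{R}_+)$. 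A small refinement of $\varphi_{0,k}$ near the jump times of $u$ then secures $(x_k,u_k)(\tau)\to(x,u)(\tau)$ for \emph{every} $\tau$, completing (i).

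The step I expect to be the main obstacle — and the genuinely new point with respect to \cite[Thm.~4.2]{AR}, where $g_1,\dots,g_m$ are $v$-independent — is the treatment of the non-drift, impulsive terms, i.e.\ the interplay between a jump of $u$ and the values of $v_2$. Plain $L^1(T)$-convergence of $v_{2,k}$ cannot see this, because it controls $\psi_{2,k}=v_{2,k}\circ\varphi_{0,k}$ only on $\{\varphi_0'>0\}$, whereas the impulsive content is carried by $\{\varphi_0'=0\}$; condition (ii) of Definition~\ref{edsdefv} has been built in exactly to upgrade it to \emph{strong} $L^1$-convergence of $\psi_{2,k}$ on the whole parameter interval, which is what legitimizes the strong-times-weak passage to the limit in the ``only if'' direction and what one reproduces, via the reparametrization $\mu_k\to\mathrm{id}$ above, in the ``if'' direction. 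The remaining ingredients — equiboundedness, equi-Lipschitz bounds, Gronwall estimates, changes of variables — are routine and parallel those in \cite{AR} and \cite{MSuv}.
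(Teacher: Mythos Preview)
Your overall strategy coincides with the paper's: both implications are proved by passing to the arc-length parametrization, invoking Ascoli--Arzel\`a on the space-time pairs $(\varphi_{0,k},\varphi_k)$, and using the identification of condition~(ii) in Definition~\ref{edsdefv} with strong $L^1$-convergence of $\psi_{2,k}=v_{2,k}\circ\sigma_k^{-1}$. Your diagnosis of the new obstacle (impulsive terms need strong convergence of $\psi_{2,k}$ against weak-$*$ convergence of $\varphi_k'$) is exactly the point of the paper as well.

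The one place where the paper's argument is noticeably slicker is the ``if'' direction. Rather than perturbing $\varphi_0$ and then chasing the induced reparametrization $\mu_k\to\mathrm{id}$, the paper invokes a ready-made lemma (\cite[Thm.~5.1]{AR}, revisited in \cite[Thm.~4.2]{MSuv}) that produces absolutely continuous, strictly increasing $\sigma_k:[0,T]\to[0,S]$ with $\dot\sigma_k\ge 1$ and $\sigma_k(t)\to\sigma(t)$ \emph{pointwise}, where $\sigma$ is the given clock. Setting $u_k:=\varphi\circ\sigma_k$, $v_k:=\psi\circ\sigma_k$ then yields $v_{2,k}\circ\sigma_k^{-1}\equiv\psi_2$ \emph{identically}, so condition~(ii) is trivially satisfied, and pointwise convergence at jump times is automatic since $u_k(\tau)=\varphi(\sigma_k(\tau))\to\varphi(\sigma(\tau))=u(\tau)$. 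Your explicit perturbation works too, but it forces you to verify $\psi_2\circ\mu_k^{-1}\to\psi_2$ in $L^1$ and to perform the ``small refinement near jump times'' (which, as you note, must be tailored to the specific clock $\sigma$, not just to $\varphi_0^{-1}$); the paper sidesteps both issues. In the ``only if'' direction the paper additionally first replaces $v_{1,k}$ by $v_1$ (using the Gronwall argument of Theorem~\ref{Pvk}, Case~1), which slightly simplifies the drift estimate; your direct treatment via the substitution $t=\varphi_{0,k}(s)$ is an equally valid alternative.
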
 

Before proving the theorem, let us  briefly describe the graph completion approach and  give the precise definition of  graph completion solution to   \eqref{Evg}, \eqref{EO}. For more details we refer  the interested reader   to \cite{MR} and the references therein.
 {\vskip 0.2 truecm}
\noindent For $L>0$ and $S>0$,    let ${\mathcal U}_L(S)$  denote the subset of $L$-Lipschitz  maps 
 $$(\varphi_0,\varphi):[0,S]\to {\mathbb{R}}_+\times U,$$ 
 such  that $\varphi_0(0)=0$, and $\varphi_0'(s) \geq 0$, $\varphi_0'(s)+|\varphi' (s)|\le L$ 
for   almost every  $s\in[0,S]$.  We set $L^1(S):=L^1([0,S], V)$.
 
We   call  {\em space-time controls} the elements $(\varphi_0,\varphi,\psi, S)=$ $(\varphi_0,\varphi,\psi_1,\psi_2, S)$  with $S>0$ and   $(\varphi_0,\varphi,\psi_1,\psi_2)\in \bigcup_{L>0}{\mathcal U}_L(S)\times L^1(S)\times L^1(S)$.   Let  $(\bar x_0,\bar u_0)\in {\mathbb{R}}^n\times U$. We denote by $\Gamma(\bar u_0)$ the subset of space-time controls  verifying   $(\varphi_0,\varphi)(0)=(0,\bar u_0)$ and $\varphi_0(S)=T$.  The {\rm space-time control system}  is defined by
\begin{equation}\label{SEv}
\left\{
\begin{array}{l}
\xi'(s)  =  g_0(\xi,\varphi,\psi_1) \varphi_0'(s)+ \sum_{i=1}^m {  g}_i(\xi,\varphi,\psi_2) {\varphi'_i}(s)  \quad \text{for a.e. $s\in[0,S]$,} \\ [1.5ex]
\xi(0)=\bar x_0
\end{array}
\right.
\end{equation}
 and we use  $\xi [\bar x_0,\bar u_0,\varphi_0,\varphi,\psi]$ to denote  its solution.  Notice that by just identifying regular controls $u$ and trajectories $x$ with their graphs and considering a time parametrization $t=\varphi_0(s)$,  \eqref{Ev} can be  embedded  in the space-time system \eqref{SEv}.  However, when a space-time control has  $t=\varphi_0(s)=const$ for $s\in I:=[s_1,s_2]$, the pair $(\xi,\varphi)$  describes on $I$ the `instantaneous evolution' at time $t$ of the system; this is a way to  define generalized controls and trajectories for the original control system in the extended, space-time setting.  Now any space-time trajectory-control pair  gives rise  to a {\it set-valued} notion of generalized solution $x(t):=\xi\circ\varphi_0^{-1}(t)$ to \eqref{Ev}, associated to a control $(u,v)$ with  $(u,v)(t)\in (\varphi,\psi)\circ\varphi_0^{-1}(t)$;  following \cite{AR}, a (univalued) concept of graph completion solution is then obtained  by the choice of a suitable selection.

\noindent Since the space-time control system \eqref{SEv} is  rate-independent, without loss of generality we consider just controls verifying
 $$
\varphi_0'(s)+|\varphi' (s)|=1\quad \text{for  a.e. $s\in[0,S]$. }
$$
 ${\Gamma}_f(\bar u_0)$ will  denote the subset of such controls, to which we  will refer to as  {\it feasible space-time controls}. 
   
\begin{definition}\label{GCloc}  Let  $(u,v)=(u,v_1,v_2) \in BV(T) \times L^1(T)\times L^1(T)$  and $u(0)=\bar u_0\in U$.  We say that a   space-time control     $(\varphi_0,\varphi,\psi, S)\in{\Gamma}_f(\bar u_0)$   is a {\em graph completion   of $(u,v)$}  if   
$$
\forall t\in[0,T], \ \ \exists    s\in[0,S] \ \text{such that} \ 
 (\varphi_0,\varphi,\psi)(s)=(t,u(t),v(t)).
 $$
 We call a {\it clock} any strictly  increasing, surjective  function $\sigma:[0,T]\to[0,S]$ such that 
$$
(\varphi_0,\varphi)(\sigma(t))=(t,u(t)) \ \text{ for every $t\in[0,T]$.}  
$$
\end{definition}

 \begin{definition}\label{GClocS}    Given a control $(u,v) \in BV(T) \times L^1(T)\times L^1(T)$  with $u(0)=\bar u_0$, let     
 $(\varphi_0,\varphi,\psi, S)$ be a  graph- completion  of $(u,v)$ and   let $\sigma$ be a clock. Set   $\xi:=\xi[\bar x_0, \bar u_0,\varphi_0,\varphi,\psi]$.  
 A map 
$$
x:[0,T]\to{\mathbb{R}}^n, \quad x(t){:=} \xi\circ\sigma(t)\  \ \forall t\in[0,T],
$$
is called a {\it graph completion solution} to \eqref{Evg}, \eqref{EO}.
\end{definition}
  
\begin{proof}[Proof of Theorem  \ref{Th2}]  Let  $(u,v) \in BV(T) \times L^1(T) \times L^1(T)$  and $u(0)=\bar u_0\in U$.   We  begin by showing that a     graph completion solution $x$  to \eqref{Evg}, \eqref{EOv} associated to $(u,v)$  is a E-BVS limit solution.
By Definitions \ref{GCloc} and \ref{GClocS},  there exist a feasible  space-time control   $(\varphi_0,\varphi,\psi, S)\in {\Gamma}(\bar u_0)$  and a surjective, strictly increasing function \linebreak $\sigma:[0,T]\to[0,S]$ such that, setting  $\xi:=\xi[\bar x_0,\bar u_0, \varphi_0,\varphi,\psi]$, one has 
\begin{equation}\label{fsj}
(\xi,\varphi_0,\varphi,\psi)\circ\sigma(t)=(x(t),t, u(t),v(t))   \qquad \forall t\in[0,T] .
\end{equation}
By \cite[Thm. 5.1]{AR} as revisited in  \cite[Thm. 4.2]{MSuv}, there exists a sequence $(\sigma_k)$ of absolutely continuous, strictly increasing  maps $\sigma_k:[0,T]\to[0,S]$, such that 
\begin{itemize}
\item[(i)]  $\sigma_k(0)=0$,   $\sigma_k(T)=S$,  and
\begin{equation}\label{cp}
 \displaystyle \dot\sigma_k(t)\ge 1 \ \ \text{ for a.e. $t\in[0,T]$,}  \quad \lim_{k\to+\infty}\sigma_k(t)=\sigma(t) \quad \forall t\in[0,T];
\end{equation}
\item[(ii)]  the maps $\varphi_{0_k}:=\sigma_k^{-1}:[0,S]\to[0,T]$ are strictly increasing, $1$-Lipschitz continuous,  surjective and  converge uniformly to $\varphi_0$ in $[0,S]$. 
\end{itemize}
We are going to show that  the sequences $(u_k,v_k)$ and $(x_k)$  defined by
$$
u_k:=\varphi\circ\sigma_k,\,\, v_k:=\psi\circ\sigma_k,\,\,   x_k:=x[\bar x_0,\bar u_0,u_k,v_k], 
$$
verify all the requirements of Definition \ref{edsdefv}, so proving that $x$ is a E-BVS limit solution of  \eqref{Ev}, \eqref{EOv} associated to $(u,v)$.

\noindent In view of definition \eqref{fsj},  the pointwise  convergence of $u_k$ to $u$ follows from the continuity of $\varphi$. Moreover, the sequence $(u_k)$ has equibounded variation, since   Var$_{[0,T]}(u_k)=$Var$_{[0,S]}(\varphi)$ for every $k$.  In order to show that 
$ \displaystyle\lim_{k\to+\infty}\|v_k-v\|_{_{L^1(T)}}=0$,  take an arbitrary $\varepsilon>0$ and consider a bounded, continuous  map $\tilde\psi:[0,S]\to{\mathbb{R}}^{2l}$  such that 
$$
\int_0^S|\tilde\psi(s)-\psi(s)|\,ds<\varepsilon,
$$
(such $\tilde\psi$ exists  by well known density results).  Hence 
$$
\begin{array}{l}
\int_0^T|v_k(t)-v(t)|\,dt=\int_0^T|\psi(\sigma_k(t))-\psi(\sigma(t))|\,dt\le \int_0^T|\psi(\sigma_k(t))-\tilde \psi(\sigma_k(t))|\,dt+ \\ [1.5ex]
 \qquad\int_0^T|\tilde\psi(\sigma_k(t))-\tilde\psi(\sigma(t))|\,dt+\int_0^T|\tilde\psi(\sigma(t))-\psi(\sigma(t))|\,dt \le  \\ [1.5ex]
 \qquad\quad \int_0^T|\psi(\sigma_k(t))-\tilde \psi(\sigma_k(t))|\dot\sigma_k(t)\,dt+
\int_0^T|\tilde\psi(\sigma_k(t))-\tilde\psi(\sigma(t))|\,dt+ \\ [1.5ex]
 \qquad\quad \quad \qquad\quad \int_0^T|\tilde\psi(\sigma(t))-\psi(\sigma(t))|\,d\sigma(t), 
\end{array}
$$
where the last inequality follows from the properties of $\sigma$ and    $\sigma_k$.  Now the first and the third integrals in the r.h.s.,  by the  (continuous) change of variable   $s=\sigma_k(t)$ and the discontinuous one   $s=\sigma(t)$ (see e.g. \cite{FT})  respectively,  are both less than $\varepsilon$, while the second integral tends to $0$ by the Dominated Convergence Theorem, since   $\tilde\psi$ is bounded and continuous.  By the arbitrariness of $\varepsilon>0$, this    concludes  the proof that $ \displaystyle\lim_{k\to+\infty}\|v_k-v\|_{_{L^1(T)}}=0$. Since 
$$
v_{2_k}\circ\sigma_k^{-1}=\psi_2\circ\sigma_k\circ\sigma_k^{-1}\equiv \psi_2,
$$
 the condition $\|(v_{2_k}\circ\sigma_k^{-1}-\psi_2)\,\chi_{_{[0,T+V]}}\|_{_{L^1({\mathbb{R}}_+)}}\to 0$  as $k\to+\infty$ is trivially satisfied.
  
\noindent It remains to show that $x$ is the pointwise limit of $(x_k)$.  To this aim, let us set $\xi_k:= \xi[\bar x_0,\bar u_0,\varphi_{0_k}, \varphi,\psi]$.  By the continuity of the input-output map  associated to the control system \eqref{Ev} (see  \cite[Thm. 4.1]{MR}) we derive that   $(\xi_k)$ converges uniformly to $\xi$ on $[0,S]$. Since   $x_k=\xi_k\circ\sigma_k$  on $[0,T]$, we finally obtain  that, for every $t\in[0,T]$,  one has
$$
\displaystyle\lim_{k\to+\infty}|x_k(t)-x(t)|=\lim_{k\to+\infty}|\xi_k(\sigma_k(s))-\xi(\sigma(t))|=0. 
$$
Hence $x$ is a E-BVS limit solution.

 {\vskip 0.2 truecm}
 
Let us now show that an E-BVS limit solution $x$   to \eqref{Evg}, \eqref{EOv} associated to $(u,v)$ is a graph completion solution.  By Definition  \ref{edsdefv}, there exist $\psi_2\in L^1(T)$ and  a sequence  $(u_k,v_k)\subset  AC(T)\times L^1(T)\times L^1(T)$  with $u_k(0)=\bar u_0$ and $V_k:=$Var$(u_k)\le K$ for some $K>0$ such that,  setting 
\begin{equation}\label{sk}
\sigma_k(t):= t+Var_{[0,t]}(u_k)  \quad(\le S:=T+K)   
\end{equation}
and $x_k:=  x[\bar x_0,\bar u_0, u_k,v_k]$, one has
\begin{equation}\label{psi2}
\begin{array}{l}
\displaystyle\lim_{k\to+\infty}(x_k(t),u_k(t))=(x(t),u(t)) \quad \text{for any $t\in[0,T]$,} \\ [1.5ex]
\displaystyle\lim_{k\to+\infty}\int_0^T|v_k(t)-v(t)|\,dt=0,  \\ [1.5ex]
\displaystyle\lim_{k\to+\infty}\int_{{\mathbb{R}}_+}|v_{2_k}\circ\sigma_k^{-1}(s)-\psi_2(s)|\,\chi_{_{[0,T+V_k]}}\,ds=0
\end{array}
\end{equation} 
Arguing as   in the proof of Theorem \ref{Pvk}, Case 1, one can prove  that it is possible  to assume,  without loss of generality, that  $v_{1_k}=v_1$ for every $k$.  Let $\varphi_{0_k}:[0,S]\to[0,T]$ be   the $1$-Lipschitz continuous,  increasing function such that  
$$
\varphi_{0_k}:=\sigma_k^{-1} \ \text{ on $[0,T+V_k]$, and }  \ \varphi_{0_k}(s)=T \ \text{ for all $s\in]T+V_k, S]$.}
$$
Set $\varphi_k:= u_k\circ\varphi_{0_k}.$ Then the sequence of space-time controls  $(\varphi_{0_k},\varphi_k)$   is $1$-Lipschitz continuous on $[0,S]$ and satisfies $\varphi'_{0_k}(s)+|\varphi_k'(s)|=1$ for a.e. $s\in [0,T+V_k]$ (and  $\varphi'_{0_k}(s)+|\varphi_k'(s)|=0$ for $s>T+V_k$).  
 Therefore by Ascoli-Arzel\`a's Theorem, taking if necessary a subsequence,  still denoted by  $(\varphi_{0_k},\varphi_k)$, it converges  uniformly   to a  Lipschitz continuous function $(\varphi_0,\varphi)$ such that $\varphi'_{0}(s)+|\varphi'(s)|\le1$ for $s\in[0,S]$. 
Let us observe that  $(\varphi_0,\varphi)$ is a  graph completion of $u$, possibly not feasible (namely, not verifying the equality $\varphi'_{0}(s)+|\varphi'(s)|=1$ a.e.). Indeed, for every $t\in[0,T]$,  there exist a subsequence $(\sigma_{k'}(t))$ and $\sigma(t)\in [0,S]$  such that $\lim_{k'}\sigma_{k'}(t)=\sigma(t)$. Therefore, by the uniform convergence of  $(\varphi_{0_k},\varphi_k)$  it follows that 
$$
(\varphi_0,\varphi)\circ\sigma(t)=\displaystyle\lim_{k'\to+\infty}(\varphi_{0_{k'}},\varphi_{k'})\circ\sigma_{k'}(t)=(t,u(t)).
$$
 Set  
 $$
 \psi_1:=v\circ\varphi_0, \quad \psi:=(\psi_1,\psi_2),
 $$
 where $\psi_2$ is the same as in \eqref{psi2}   and define the solution $\xi:=\xi[\bar x_0,\bar u_0, \varphi_0,\varphi, \psi]$ associated to the space-time control $(\varphi_0,\varphi,\psi, S)$.  Moreover, let $\psi_k:=(v_1\circ\varphi_{0_k},  v_{2_k}\circ\varphi_{0_k})$ and $\xi_k:=\xi[\bar x_0,\bar u_0, \varphi_{0_k},\varphi_k, \psi_k]$. Clearly,  $x_k=\xi_k\circ \sigma_k$. In order to prove that  $x$ is a graph completion solution, let us first verify that $x=\xi\circ\sigma$.  To this aim, we observe that this is true as soon as there exists a subsequence of $(\xi_k)$ uniformly converging in $[0,S]$ to $\xi$. In this case indeed,  for every $t\in[0,T]$,   the pointwise convergence of $ \sigma_{k'}(t)$ to $\sigma(t)$ implies that
$$
x(t)=\lim_{k'}x_{k'}(t)=\lim_{k'} \xi_{k'}\circ \sigma_{k'}(t)=\xi\circ\sigma(t).
$$
At this point, if we introduce the change of variable
 $$
 \eta(s):=\int_0^s\left[\varphi_0'(r)+|\varphi'(r)|\right]\,dr \quad \forall s\in[0,S], \quad \tilde V:= \eta(S)-T,
 $$
denote by $s(\cdot): [0,T+\tilde V]\to[0,S]$ its  the strictly increasing  right-inverse, define the  feasible space-time control 
 $$
 (\tilde\varphi_0,\tilde\varphi, \tilde\psi,\tilde S):=(\varphi_0\circ s,\varphi\circ s,\psi\circ s, T+\tilde V),
 $$
 and the clock $\tilde\sigma:=\eta\circ\sigma$, we can easily obtain that $x$ is a graph completion solution, since   
 $$
 x=\xi\circ\sigma=\tilde\xi\circ\tilde\sigma \qquad (\tilde\xi:=\xi[\bar x_0,\bar u_0,\tilde\varphi_0,\tilde\varphi, \tilde\psi]).
  $$
 To conclude the proof it remains to show that, eventually for a subsequence, one has 
\begin{equation}\label{cunif}
\displaystyle\lim_{k\to+\infty}\sup_{s\in[0,S]}\,|\xi_k(s)-\xi(s)|=0.
\end{equation}
Since both the derivatives $(\varphi_{0_k}', \varphi')$, $(\varphi'_0,\varphi')$ are bounded, by standard estimates it follows that  
$$
\sup_{s\in[0,  S]}|\xi(s)|,\ \sup_{s\in[0, S]}|\xi_k(s)|\le \bar M:=(|\bar x_0|+(m+1)M  S)e^{(m+1)M S}.
$$
Let us denote by $\omega$ a modulus of continuity of $g_0(x,u,\cdot), \dots, g_m(x,u,\cdot)$,   by $\tilde L$    a   Lipschitz constant of  $g_0, \dots, g_m$ in $(x,u)$ uniformly w.r.t. $v$,   and by   $\tilde M$ an upper bund  for all the vector fields $g_i$, $i=0,\dots m$,   in the compact set $\overline{B_n(0,\bar M)}\times U\times V$.  After some calculations,  setting
$$
\begin{array}{l}
\displaystyle  f_k(s):=\bigg| \int_0^{s}\big[g_0(\xi(r),\varphi(r),v_1\circ\varphi_{0}(r))[\varphi'_{0_k}(r)-\varphi'_0(r)] + \\ [1.5ex]
\qquad\qquad\qquad\qquad\sum_{i=1}^m g_i(\xi(r),\varphi(r), \psi_2(r))[\varphi'_{i_k}(r)-\varphi'_i(r)]\big]\,dr\bigg|,
\end{array} 
 $$ 
 and
 $$
 \begin{array}{l}
 \rho_{1_k}:= \int_0^{S} \omega (|v_1\circ\varphi_{0_{k}}(r)-v_1\circ\varphi_{0}(r)|)\varphi'_{0_k}(r)\,dr,   \\ [1.5ex]
  \rho_{2_k}:= \sum_{i=1}^m\int_0^{S} \omega (|v_{2_k}\circ\varphi_{0_{k}}(r)-\psi_2(r)|)\,|\varphi'_{i_k}(r)|\,dr,
  \end{array}
 $$
by the Gronwall's Lemma  we get to  
\begin{equation}\label{4r}
\displaystyle|\xi_k(s)-\xi(s)|\le  \, e^{(m+1)\tilde LS}\, \left(\sup_{s\in[0,S]} f_k(s)+\rho_{1_k}+\rho_{2_k}\right).
\end{equation}
 The uniform convergence of $(\varphi_{0_k},\varphi_k)$ to $(\varphi_{0},\varphi)$ on $[0, S]$  implies that the maps $(\varphi'_{0_k},\varphi'_{k})$ tend to $(\varphi'_0, \varphi')$ in  the weak$^*$ topology   of $L^\infty([0, S], {\mathbb{R}}^{1+m})$, so that $f_k(s)$
 tends to 0 as $k\to+\infty$ for every $s\in[0,S]$.    The uniform convergence to 0 of the $f_k$'s now follows from Ascoli-Arzel\'a Theorem, for the $f_k$'s are equibounded and equi-Lipschitzean.  By \eqref{psi2}  and the  inequality $|\varphi'_{i_k}|\le 1$ a.e., we derive that $\displaystyle\lim_{k\to+\infty}\rho_{2_k}=0$.   By a time-change, we get
$$
\int_0^{S}|v_1\circ\varphi_{0_{k}}(r)-v_1\circ\varphi_{0}(r)|\varphi'_{0_k}(r)\,ds=\int_0^{T}|v_1(t)-v_1\circ\varphi_{0}\circ\sigma_k(t)|\,dt.
$$
Hence,  if we show that 
\begin{equation}\label{nonlip}
\displaystyle\lim_{k\to+\infty}\int_0^{S}|v_1\circ\varphi_{0_{k}}(r)-v_1\circ\varphi_{0}(r)|\varphi'_{0_k}(r)\,ds=0,
\end{equation}
 then there exists a subsequence of $(v_1-v_1\circ\varphi_{0}\circ\sigma_k)$ converging to 0 a.e.  on  $[0, T]$, and by the Dominated Convergence Theorem we obtain that, for such subsequence, 
\begin{equation}\label{nonlipg}
 \rho_{1_k}=\int_0^{T}\omega(|v(t)-v\circ\varphi_{0}\circ\sigma_h(t)|)\,dt\to0 \quad \text{as $k\to+\infty$,}
\end{equation}
so concluding the proof of \eqref{cunif}.

\noindent Since $|\varphi'_{0_k}|\le 1$, when $v_1$ is a continuous function (\ref{nonlip})  holds true  
owing to the uniform continuity of $v_1$ and to the uniform convergence of $\varphi_{0_k}$ to $\varphi_{0}$ on $[0,S]$.  For  $v_1\in L^1(T)$, $\forall\varepsilon>0$   there exists, by density, $\tilde v_1\in C_c([0,T], {\mathbb{R}}^l)$  such that $\int_0^{T}|\tilde v_1(t)-v_1(t)|\,dt\le\varepsilon.$ Hence we get
$$
\begin{array}{l}\int_0^{S}| v_1\circ\varphi_{0_k}(s)-v_1\circ\varphi_{0}(s)|\varphi'_{0_k}(s)ds\le 
\int_0^{S}| v_1\circ\varphi_{0_k}(s)-\tilde v_1\circ\varphi_{0_k}(s)|\varphi'_{0_k}(s)\,ds+\\ \, \\
\quad\int_0^{ S}|\tilde v_1\circ\varphi_{0_k}(s)-\tilde v_1\circ\varphi_{0}(s)|\varphi'_{0_k}(s)\,ds+
\int_0^ S|\tilde v_1\circ\varphi_{0 }(s)- v_1\circ\varphi_{0}(s)|\varphi'_{0_k}(s)\,ds.
\end{array}
$$
Performing the change of variable $t=\varphi_{0_k}(s)$,   the first integral on the r.h.s. is smaller than $\varepsilon$, while 
the second one converges to 0 because $\tilde v_1$ is continuous. For the third integral on the r.h.s., taking into account that  $v_1$ and $\tilde v_1|$ are bounded maps,  by the weak$^*$  convergence of  $\varphi'_{0_k}$ to   $\varphi'_0$  we derive that
  $$
 \int_0^{S}|\tilde v_1\circ\varphi_{0 }(s)- v_1\circ\varphi_{0}(s)|\varphi'_{0_k}(s)\,ds\to \int_0^{S}|\tilde v_1\circ\varphi_{0}(s)-v_1\circ\varphi_{0}(s)|\varphi'_{0}(s)\,ds 
$$
as $k\to+\infty$,  and the last term is smaller   than $\varepsilon$   by the change of variable  $t=\varphi_{0}(s)$.  By the arbitrariness of $\varepsilon>0$ this concludes the proof of (\ref{nonlip}). 
\end{proof}
 
\medskip

\medskip


\begin{thebibliography}{99}

\bibitem{AR} 
 \newblock  M.S. Aronna, F. Rampazzo,  
     \newblock ${\mathcal L}^1$ limit solutions for control systems,
     \newblock \emph{J. Differential Equations}, \textbf{258 no. 3} (2015),  954--979.

 \bibitem{AMR}
  \newblock  M.S. Aronna, M. Motta and  F. Rampazzo, 
   \newblock Infimum gaps for limit solutions,
     \newblock \emph{ Set-Valued Var. Anal.},  \textbf{23 no. 1} (2015),  3--22.


\bibitem{AKP} 
   \newblock A. Arutyunov, D. Karamzin and  F. Pereira, 
      \newblock   On a generalization of the impulsive control concept: controlling system jumps, 
         \newblock \emph{ Discrete Contin. Dyn. Syst.,}  \textbf{29 no. 2} (2011),  403--415.

\bibitem{A} 
   \newblock J.-P. Aubin, 
   \newblock  \emph{  Impulse Differential Equations and Hybrid Systems: A Viability Approach.}
      \newblock   Lecture Notes. University of California, Berkeley,  2000

\bibitem{BI}
   \newblock  E. Barron, H. Ishii, 
      \newblock  The Bellman equation for minimizing the maximum cost,
         \newblock  \emph{Nonlinear Anal., } \textbf{13 no. 9} (1989), 1067--1090.

\bibitem{BP} 
\newblock A. Bressan, B. Piccoli,  
\newblock \emph{Introduction to the mathematical theory of control.}
\newblock AIMS Series on Applied Mathematics, 2. American Institute of Mathematical Sciences (AIMS), Springfield, MO, 2007. 

\bibitem{BR0}  
\newblock A.Bressan,  F. Rampazzo,  
\newblock Moving constraints as stabilizing controls in classical mechanics.
\newblock  \emph{Arch. Ration. Mech. Anal.,}  \textbf{196  no. 1}  (2010), 97--14.
   
\bibitem{BR} 
\newblock A.Bressan,  F. Rampazzo, 
\newblock On differential systems with vector-valued impulsive controls.
\newblock  \emph{Boll. Un. Mat. Ital. B}   \textbf{(7) 2  no. 3} (1988), 641--656.

\bibitem{BR1} 
\newblock  A.Bressan,  F. Rampazzo,  
\newblock  Impulsive control systems with commutative vector fields.
\newblock   \emph{ J. Optim. Theory Appl., }  \textbf{71  no. 1} (1991),  67--83.

 
 \bibitem{BR2} 
 \newblock A. Bressan, F. Rampazzo, 
 \newblock  Impulsive control systems without commutativity assumptions. 
 \newblock   \emph{J. Optim. Theory Appl.,}  \textbf{81  no. 3}  (1994), 435--457.
 
 \bibitem{FT} 
  \newblock N. Falkner,  G. Teschl,    
 \newblock  On the substitution rule for Lebesgue-Stieltjes integrals. 
 \newblock  \emph{ Expo. Math.,}   \textbf{30  no. 4} (2012), 412--418.
 
 \bibitem{FGP} 
  \newblock S. L. Fraga, R. Gomes   and F.L. Pereira,
   \newblock  An impulsive framework for the control of hybrid systems,
 \newblock  \emph{in Proc. 46 IEEE Conf. Decision Control,}  (2007) ,  5444Ð5449.

 \bibitem{GoS} 
  \newblock E. Goncharova, M. Staritsyn,   
   \newblock  Optimization of Measure-Driven Hybrid Systems,
    \newblock  \emph{J. Optim. Theory Appl.,}  \textbf{no. 153} (2012), 139--156
 
 \bibitem{GS} 
  \newblock M. Guerra,  A. Sarychev,
   \newblock  Fr\'echet generalized trajectories and minimizers for variational problems of low coercivity,
    \newblock  \emph{ J. Dyn. Control Syst.,}   \textbf{21,  no. 3}   (2015),   351--377.
 


 \bibitem{HCN} 
  \newblock W. Haddad, V. Chellaboina and  S.  Nersesov,  
   \newblock  \emph{ Impulsive and Hybrid Dynamical Systems: Stability, Dissipativity, and Control.}
    \newblock  Princeton University Press, Princeton, 2006.  

\bibitem{KDPS}
  \newblock  D.Y. Karamzin,  V.A. de Oliveira,  F.L. Pereira and  G.N. Silva, 
   \newblock  On the properness of an impulsive control extension of dynamic optimization problems,
    \newblock  \emph{ESAIM Control Optim. Calc. Var.,}   \textbf{21,  no. 3} (2015),  857--875.
 
\bibitem{KT} 
 \newblock A. Kurzhanski,  P. Tochilin,   
  \newblock  Impulse Controls in Models of Hybrid Systems,
   \newblock  \emph{Differential
Equations,}  \textbf{45 no. 5} (2009), 731--742. 

\bibitem{LQ} 
 \newblock T. Lyons, Z. Qian,  
  \newblock  \emph{ System control and rough paths.}
   \newblock  Oxford Mathematical Monographs. Oxford Science Publications. Oxford University Press, Oxford, 2002.

  
\bibitem{MiRu} M. Miller, E. Y. Rubinovich, 
 \newblock  \emph{ Impulsive control in continuous and discrete-continuous systems.}
  \newblock  Kluwer Academic/Plenum Publishers, New York, 2003. 

 \bibitem{MR} 
  \newblock M. Motta, F. Rampazzo,  
   \newblock   Space-time trajectories of nonlinear systems driven by ordinary and impulsive controls.
    \newblock  \emph{ Differential Integral Equations,}  \textbf{8  no. 2} (1995), 269--288
    
      \bibitem{MR1} 
      \newblock M. Motta,  F. Rampazzo, 
       \newblock Dynamic programming for nonlinear systems driven by ordinary and impulsive controls.
        \newblock  \emph{SIAM J. Control Optim.,}  \textbf{34  no. 1}, (1996), 199--225. 
 
  \bibitem{MS}   
  \newblock M. Motta,   C. Sartori, 
  \newblock Uniqueness results for boundary  value problems arising from finite fuel and other singular and unbounded  stochastic control problems,
  \newblock   \emph{Discrete Contin. Dyn. Syst.,}   \textbf{21 no. 2} (2008), 513--535.

 \bibitem{MS2}   
  \newblock M.Motta, C. Sartori, 
   \newblock   On asymptotic exit-time control problems lacking coercivity.
    \newblock   \emph{ESAIM Control Optim. Calc. Var.,}  \textbf{20  no. 4} (2014), 957--982. 
 
 \bibitem{MSuv}   
  \newblock M. Motta,  C. Sartori,    
   \newblock  Unbounded variation and solutions  of impulsive control systems, preprint,
    \newblock  http://arxiv.org/abs/1705.01724 
 
\bibitem{SV} 
 \newblock G. Silva, R. Vinter,
 \newblock   Measure driven differential inclusions,
   \newblock  \emph{ J. Math. Anal. Appl.,}   \textbf{202 no. 3}  (1996),  727--746.

\bibitem{W} 
 \newblock H. Whitney,   
  \newblock Functions differentiable on the boundaries of regions,
   \newblock  \emph{ Ann. of Math.,}  \textbf{35 no. 3} (1934), 482--485.
 
\bibitem{WS}  
 \newblock P. Wolenski, S. \v{Z}abi\'{c}, 
  \newblock A sampling method and approximation results for impulsive systems,
   \newblock  \emph{ SIAM J. Control Optim.,}  \textbf{46 no. 3}   (2007), 983--998.

\bibitem{Y}  
 \newblock K. Yunt,  
 \newblock  Modelling of Mechanical Blocking,
  \newblock  \emph{ Recent Researches in Circuits, Systems, Me-
chanics and Transportation Systems,} (2011), 123--128.
 
 \end{thebibliography}
\end{document}